\let\originalleft\left
\let\originalright\right
\renewcommand{\left}{\mathopen{}\mathclose\bgroup\originalleft}
\renewcommand{\right}{\aftergroup\egroup\originalright}
\newcommand{\bE}{\mathbb{E}}
\newcommand{\cS}{\mathcal{S}}
\newcommand{\cN}{\mathcal{N}}
\newcommand{\cU}{\mathcal{U}}
\newcommand{\ust}{^{\star}}
\newcommand{\bR}{\mathbb{R}}
\newcommand{\bB}{\mathbb{B}}
\newcommand{\bZ}{\mathbb{Z}}
\newcommand{\bP}{\mathbb{P}}
\newcommand{\cE}{\mathcal{E}}
\newcommand{\cB}{\mathcal{B}}
\newcommand{\cL}{\mathcal{L}}
\newcommand{\cY}{\mathcal{Y}}
\newcommand{\cF}{\mathcal{F}}
\newcommand{\Vb}{V^{(\beta)}}
\newcommand{\Qb}{Q^{(\beta)}}
\newcommand{\Jb}{J^{(\beta)}}
\newcommand{\state}{x,\tau,y,b}
\newcommand{\newstate}{x_+,\tau_+,y_+,b_+}
\newcommand{\Tx}{\Tilde{x}}
\newcommand{\Ttau}{\Tilde{\tau}}
\newcommand{\Ty}{\Tilde{y}}
\newcommand{\Tb}{\Tilde{b}}
\newcommand{\TcS}{\Tilde{\cS}}
\newcommand{\TcU}{\Tilde{\cU}}
\newcommand{\Tc}{\Tilde{c}}
\newcommand{\Tstate}{\Tx,\Ttau,\Ty,\Tb}
\newcommand{\Tnewstate}{\Tx_+,\Ttau_+,\Ty_+, \Tb_+}
\newcommand{\Tph}{\Tilde{\phi}}
\newcommand{\Tu}{\Tilde{u}}
\newcommand{\TVb}{\Tilde{V}^{(\beta)}}
\newcommand{\TQb}{\Tilde{Q}^{(\beta)}}
\newcommand{\TJb}{\Tilde{J}^{(\beta)}}
\newcommand{\Tp}{\Tilde{p}}
\newcommand{\itgR}{\int_{\bR}}
\newcommand{\itg}{\int_{\bR_+}}
\DeclareMathOperator*{\argmin}{arg\,min}
\newcommand{\nal}[1]{\begin{align*}#1\end{align*}}
\newcommand{\al}[1]{\begin{align}#1\end{align}}
\newtheorem{assumption}{\textbf{Assumption}}
\newtheorem{definition}{Definition}
\newtheorem{theorem}{Theorem}
\newtheorem{proposition}{Proposition}
\newtheorem{corollary}{Corollary}
\newtheorem{lemma}{Lemma}
\newif\ifuseRomanappendices
\def\BibTeX{{\rm B\kern-.05em{\sc i\kern-.025em b}\kern-.08em
    T\kern-.1667em\lower.7ex\hbox{E}\kern-.125emX}}
\title{\LARGE \bf
Optimal Scheduling of Uplink-Downlink Networked Control Systems with Energy Harvesting Sensor
}
\author{Manali Dutta and Rahul Singh
\thanks{The authors are with the Department of Electrical Communication Engineering,
Indian Institute of Science, Bengaluru, Karnataka 560012, India (e-mail: manalidutta@iisc.ac.in and rahulsingh@iisc.ac.in)}%
}
\begin{document}

\maketitle

\begin{abstract}
In this work, we consider a wireless networked control system (WNCS) consisting of a plant, a battery-operated sensor, a controller, and an actuator. The battery in the sensor harvests energy from the environment. The sensor then uses this energy for packet transmissions. There are two types of wireless communication channels, (i) sensor--controller channel (also called uplink channel), and (ii) controller--actuator channel (also called downlink channel). The controller is \emph{half-duplex}~\cite{huang2019sense},~\cite{huang2019optimal}, and this prevents it from simultaneously receiving an update from the sensor, and also transmitting a control packet to the actuator. Though frequent transmissions via uplink channel improve controller's estimate of the plant state, thereby improving the quality of control command, but this also reduces the timely control of the plant. Hence, in order to strike a balance between these two competing objectives, we consider the problem of designing an optimal scheduling policy that minimizes the expected cumulative infinite horizon discounted cost, where the instantaneous cost is equal to the square of the plant state. At each time $t$, the scheduler at the sensor has to decide whether it should activate the uplink channel, or downlink. We pose this dynamic optimization problem as a Markov decision process (MDP), in which the state at time $t$ is composed of (i) the plant state $x(t)$, (ii) the age of the data packet available at the controller, denoted by $\tau(t)$, and is equal to the time elapsed since the last successful transmission on the uplink channel, (iii) a binary variable $y(t)$ which indicates the availability of a control packet at the controller, and (iv) the energy level of the battery at the sensor $b(t)$. We show that there exists an optimal scheduling policy that exhibits a threshold structure, meaning that for each time $t$, if there is a control packet available with the controller, then in order to prioritize between the sensor packet and the control packet, the policy considers $|x(t)|$ and the battery level $b(t)$. It then activates the downlink channel in case  $|x(t)|$ exceeds a threshold $x\ust(\tau(t),b(t))$.~To show this result, we first construct an equivalent ``folded MDP''~\cite{chakravorty2018sufficient}, show the structural result of optimal policy hold for this folded MDP, and then unfold this to the original MDP.
\end{abstract}

\begin{keywords}
Wireless networked control system (WNCS), uplink donwlink, scheduling policy, Markov decision process (MDP), threshold structure.
\end{keywords}

\section{Introduction}
\subsection{Literature Review}
A typical wireless networked control system (WNCS) consists of plants, sensors, remote controllers, and actuators that are spatially distributed. These collaborate via a wireless communication network~\cite{zhang2019networked} in order to ensure that the system performance is maximized. The sensors observe the plant, encode the observations into data packets, and then transmit them to the controller via an unreliable wireless communication channel. The controllers estimate the plant state, and then generate control commands based on the information received from the sensors. Subsequently, the controllers transmit these control commands via an unreliable wireless communication channel to the actuators which then utilize these commands to control the plant. With recent developments in wireless networks, cloud computing, edge computing~\cite{wang2023review}, the WNCS has garnered widespread application in diverse fields such as industrial automation~\cite{willig2008recent},~\cite{ascorti2017wireless}, intelligent vehicle~\cite{lin2017integrated},~\cite{li2019adaptive}, intelligent buildings~\cite{pandharipande2014sensor},~\cite{bakule2016decentralized}, and many more~\cite{park2017wireless}.

Wireless transmissions in WNCS can be divided into the following two types: (i) data packet transmissions from the sensor to the controller, and (ii) control packet transmissions from the controller to the actuator. Wireless communication channels are prone to packet losses and delays~\cite{hespanha2007survey}. Since wireless transmitters have limited battery life~\cite{bi2015wireless}, it is not efficient to continually transmit these packets since transmission consumes power. 

There are two types of controllers in the existing literature in NCS,  
(i) those that operate in a \emph{full-duplex} mode, meaning that they can transmit and receive packets simultaneously~\cite{imer2004optimal}~\cite{schenato2007foundations}, 
(ii) \emph{half-duplex} mode, i.e. at any time $t$ the controller can either receive a data packet from the sensor, or it can transmit a control packet to the actuator, but it cannot perform both of these tasks simultaneously\cite{huang2019sense}. The current work assumes that the controller is half-duplex. Henceforth, sensor--controller channel will be denoted uplink channel, while controller--actuator channel will be denoted downlink channel, see Fig.~\ref{fig:setup}. We assume that the sensor is operated by a battery that can store a limited amount of energy. It harvests energy from the environment~\cite{nayyar2013optimal,bhatti2016energy,prauzek2018energy}. The sensor consumes 1 unit of energy to activate the uplink channel.~Now, frequent transmissions via uplink channel improve the quality of the estimates available with the controller, but then this is achieved at the expense of (i) a delay in applying control to the plant since the downlink channel cannot be activated when uplink channel is in use, and (ii) an increased power consumption. Hence, in order to ensure efficient control performance, it is necessary to dynamically schedule transmissions in both these channels. There is an extensive literature that addresses the problem of designing optimal scheduling policies for sensor--controller and controller--actuator channels separately. We will now discuss some of these works in more detail.

\emph{Policies for sensor--controller channel}: The works~\cite{chakravorty2016remote,chakravorty2017structure,ren2017infinite,chakravorty2019remote,wu2019learning,10384144} consider a remote estimation setup, in which the goal is to minimize the estimation error, while keeping the power consumption at a minimal level. No controls are to be applied to the underlying process. 
More specifically, the works~\cite{chakravorty2016remote,chakravorty2017structure,ren2017infinite,chakravorty2019remote} consider the problem of designing a scheduler at the sensor, and an estimator that are jointly optimal.~It is shown that there exists an optimal scheduling policy that has a threshold structure with respect to the current innovation error, which is given by the difference between the current and the \emph{a priori} estimate of the plant state. Thus, the sensor transmits only when the innovation error exceeds a certain threshold. The optimal estimator is shown to be ``Kalman-like,'' i.e., when the estimator receives a data packet, then it updates the estimate of the plant state with the received data, otherwise it predicts the plant state based on the current information available to it.~\cite{chakravorty2016remote} assumes that the packet losses over the channel are i.i.d., and the loss probability is known to the sensor while making decisions.
~\cite{chakravorty2017structure} models the channel as a Gilbert-Elliott channel~\cite{laourine2010betting}, i.e. the state of the channel is described by a two-state Markov chain.~\cite{ren2017infinite},~\cite{chakravorty2019remote} model the channel as a Markov chain with more than two states, but with the difference that~\cite{ren2017infinite} assumes that the channel state is instantaneously known to the sensor while making a decision, while~\cite{chakravorty2019remote} assumes that the sensor knows the channel state with delay of one unit.~\cite{wu2019learning},~\cite{10384144} fix the estimator to be ``Kalman-like,'' and derive optimal scheduling policy.~\cite{wu2019learning} assumes an i.i.d. packet drop channel model with unknown drop probability, and shows that there exists an optimal scheduling policy that is of threshold-type with respect to the time elapsed since the last successful transmission.~\cite{10384144} models the channel as a Gilbert-Elliott channel, and assumes that the channel state is not known to the sensor. It then shows the existence of an optimal threshold-type scheduling policy with respect to the current belief on the channel state, i.e. it transmits only when the conditional probability of the channel, conditioned on the the information available with the scheduler, to be in the good state exceeds certain threshold.

\emph{Policies for controller--actuator channel}: The works~\cite{imer2006optimal,bommannavar2008optimal,shi2012finite,xu2018optimal} consider a WNCS and address the problem of designing optimal controls as well as a scheduling policy that decides when to transmit control packets. They impose constraints on the number of times control can be applied, i.e., controls can be applied only for a $N$ steps out of the total finite time horizon of $T$ steps.~\cite{imer2006optimal} considers a scalar linear quadratic Gaussian (LQG) system. It shows that the optimal control is linear in the optimal estimate of the plant state, the latter quantity can be computed efficiently using the Kalman filter~\cite{kalman1960new}. Moreover, the optimal control schedule is decided by a threshold policy that applies control only when the magnitude of the current plant estimate exceeds a certain threshold.~\cite{bommannavar2008optimal} shows that the results of~\cite{imer2006optimal} continue to hold when the plant has a higher order, and the channel is lossy.~Both~\cite{imer2006optimal}, and~\cite{bommannavar2008optimal} consider cost function that is quadratic in the plant state, and do not impose any control cost. The works~\cite{shi2012finite},~\cite{xu2018optimal} consider a plant which does not have process noise, but the cost function 
is quadratic in the state and controls.~\cite{shi2012finite} designs optimal control schedule for an unstable scalar system. It shows that the optimal schedule is to control the system for the first $N$ time instants, and optimal controls are a linear function of the plant state.~\cite{xu2018optimal} extends the work of~\cite{shi2012finite} to the case when the underlying scalar system is stable, and also allows for multiple systems that share the same channel. It shows that the results of~\cite{shi2012finite} continue to hold even when the plant is stable, as long as certain conditions are satisfied by the system parameters and the cost function.

However, there has been a limited work on designing optimal scheduling policies for the case when the sensor--controller and the controller--actuator channels interfere, which prevents simultaneous transmissions on both these channels~\cite{imer2006measure,huang2019optimal,huang2019sense}.~The work~\cite{imer2006measure} considers a networked LQG system with a perfect communication channel, in which the controller has to decide whether to activate the uplink or the downlink channel. It assumes that the controller knows the state of the plant, and shows that the optimal scheduling policy for the controller is to activate the downlink channel at time $t$ only when the magnitude of the system state at time $t-1$ surpasses a certain threshold. The work~\cite{huang2019optimal} assumes that both the uplink and downlink channels suffer from i.i.d. packet losses, and designs an optimal scheduling policy for the controller. It formulates the optimization problem as a MDP, in which the state at time $t$ is composed of (i) the age of the data packet available with the controller $\tau(t)$, (ii) the age of the latest control packet available with the actuator, denoted $\varphi(t)$.~It shows that the optimal policy is of switching-type, i.e., the controller switches to attempting a transmission on the uplink channel if for each possible age of the control packet $\varphi$, the age of data packet $\tau$ exceeds a certain threshold, and switches to the downlink channel if for each $\tau$ the age of control packet $\varphi$ exceeds a certain threshold.
\subsection{Contributions}
We adopt a half-duplex mode controller proposed in~\cite{huang2019optimal}, in which the controller cannot transmit packets on both the uplink and downlink channels simultaneously. A half-duplex controller is more practical to implement than a full-duplex controller. This is because the latter suffers from self-interference caused due to simultaneous transmissions in both sensor--controller and controller--actuator channels. Several techniques have been developed to mitigate this self-interference, but these techniques result in an increase in device cost and power consumption~\cite{sabharwal2014band}.~However, unlike~\cite{huang2019optimal}, where the controller makes decision regarding which channel should be activated, in our model the sensor makes these decisions at each time $t$.
~The reason why sensor is given the task of making decisions is that as compared with the controller, the sensor has more information available with it since it also knows the current value of the plant state. We assume that the outcome of packet transmissions on both, the uplink as well as the downlink channel, are known at the sensor instantaneously via perfect feedback channels.~The sensor is operated by a battery with a finite energy capacity. Hence, when the battery is empty, uplink channel cannot be activated since activating this channel requires the sensor to spend energy.

~We design an optimal scheduling policy that strikes the right balance between the quality of control commands, and the effective control of the plant. Our contributions are summarized as follows,

1) We formulate the problem faced by the sensor as a dynamic optimization problem, and consider the minimization of the expected value of an infinite horizon discounted cost that is quadratic function of the plant state. In contrast to~\cite{imer2006measure} where the channels are ideal, we assume that wireless channels suffer from packet drops, which are modeled by i.i.d. random variables.

2)
~At each time $t$, the sensor has to choose to activate either uplink or downlink channel, or to keep both the channels idle. We pose this problem as a MDP in which system state at time $t$ is composed of (i) the current value of the plant state $x(t)$, (ii) the age of the data packet available with the controller, denoted by $\tau(t)$, iii) a binary random variable the indicates the presence of a control packet at the controller, and iv) the energy level of the battery. In case the sensor decides that the downlink channel must be activated, then it informs the controller to activate the downlink channel. Since this requires only 1 bit of communication, we assume it is transmitted to the controller instantaneously and without any loss. 

3) Since the instantaneous cost function of the resulting MDP is not bounded, it is immediately not evident whether the value iteration algorithm~\cite{hernandez2012discrete} can be used in order to solve the MDP. However, we show that if certain conditions are satisfied by the MDP, then the value iteration algorithm converges and yields us the value function, which in turn is used to obtain optimal policy.

4) We also derive structural results which characterize an optimal policy. The original MDP is difficult to analyze since the plant state assumes real numbered values. In order to simplify its analysis, we construct a certain ``folded MDP''~\cite{chakravorty2018sufficient} in which the modified plant state takes only non-negative values. We then show the existence of an optimal policy that exhibits a threshold structure with respect to the plant state $x(t)$, i.e., in case there is a packet at the controller, then the scheduler activates the downlink channel only when the current plant state exceeds a certain threshold. This threshold is a function of the age of the data packet available at the controller and the energy level of the battery. 

\textit{Notation}: Let $\bZ_+, \bR, \bR_+, \bR_-$ denote the set of non-negative integers, real numbers, non-negative and negative real numbers, respectively. $\bP(\cdot)$, $\bE(\cdot)$ denote the probability of an event and expectation of a random variable respectively.~$\mathcal{N}(\mu,\sigma^2)$ denotes the Gaussian distribution with mean $\mu$ and variance $\sigma^2$, and $\delta_x(\cdot)$ denotes the delta function with unit mass at $x$.

\section{Problem Formulation}\label{sec:prob_form}
We start by describing the setup in~\ref{sec:sys_mod} and then formulate the problem in~\ref{sec:obj-func}.
\subsection{System Model}\label{sec:sys_mod}
The plant is a discrete-time linear stochastic dynamical system with state at time $t $ denoted by $x(t)$, and evolves as follows, 
\al{
x(t+1) = ax(t) + v(t) + w(t),~t=0,1,2,\ldots,\label{system}
}
where the initial state is $x(0) \sim \cN(0,1)$, $x(t) \in \bR$, $a \in \bR$ describes the dynamics. $v(t) \in \bR$ is the control input that is applied by the actuator at time $t$, and~$w(t) \sim \cN(0,\sigma^2)$ is the plant noise that is assumed to be i.i.d. with distribution $\cN(0,\sigma^2)$. At each time $t$, sensor observes the plant states. It encodes these observations into data packets before transmitting them via an unreliable wireless communication channel to a remote controller. The controller estimates the current state of the plant $x(t)$, and generates controls by utilizing the information available to it until time $t$. The controller encodes the controls into data packets and transmits them to the actuator via another wireless communication channel that connects the controller with the actuator.~This setup is shown in Fig.~\ref{fig:setup}.
\noindent
\begin{figure}[t]
    \centering
    \includegraphics[scale=0.8]{./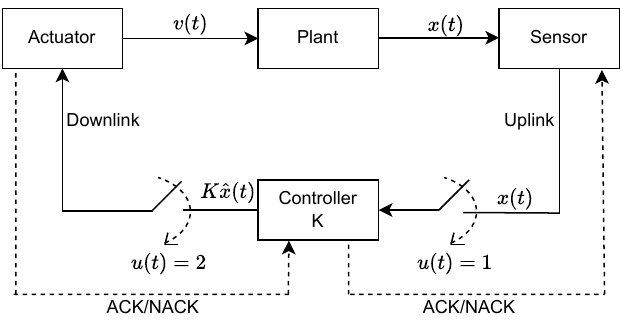}\vspace{-1em}
    \caption{The setup. The plant evolves as $x(t+1)=ax(t)+v(t)+w(t)$ with $w(t) \sim \cN(0,\sigma^2)$, control input $v(t)$, decision variable $u(t) \in \{0,1,2\}$, plant state estimate $\hat{x}(t)$, and controller gain $K$. ACK/NACK indicates the acknowledgment sent by the controller upon receiving a data packet, and by actuator upon receiving a control packet.}
    \label{fig:setup}\vspace{-1em}
\end{figure}
We now describe in detail the channel model, the scheduler which makes transmission decisions for sensor-controller and controller-actuator channels, and the controller.

\textit{Channel Model}: We model the unreliable wireless communication channel as a packet-drop channel with i.i.d. packet drops, whose packet-drop probability is $p \in [0,1]$. 

\textit{Scheduler and controller operations:}  In what follows, we use uplink channel and downlink channel to describe the sensor--controller and controller--actuator channels respectively. We assume that the sensor schedules transmissions on both of these channels. At each time $t$, only one of these two channels can be activated.~The decision of the scheduler at time $t$ is denoted by $u(t)\in \{0,1,2\}$, where, $u(t)=0$ if the sensor does not activate the any of these channels at $t$, $u(t)=1$ if uplink channel is activated at $t$, and $u(t) =2$ if downlink channel is activated.~We assume that the sensor is battery operated. The battery harvests energy from the environment. Each packet transmission by the sensor to the controller on the uplink channel consumes 1 unit of energy.~We use $b(t)$ to denote the energy level of the battery at the sensor at time $t$. The battery can store up to a maximum of $B$ units of energy, i.e., $b(t) \in \bB:=\{0,1,\ldots,B\}$. Clearly, when the battery is empty, i.e. $b(t)=0$, then the sensor cannot transmit, i.e. $u(t)$ cannot assume the value $1$.

At each $t$, the battery harvests a random amount $\ell(t)$ of energy from the environment, where $\ell(t) \in \cL:=\{1,2,\ldots,L\}$. We assume that $\ell(t), t \in \bZ_+$ are i.i.d., and we let $p_{\ell}:=\bP(\ell(t)=\ell)$, where $p_{\ell}\ge 0,\sum_{\ell=0}^L p_{\ell}=1$.~The energy level of the battery evolves as follows,
\al{\label{battery-evolv}
b(t+1)=\min \{b(t)+\ell(t)-\mathbbm{1}(u(t)=1)
, B\},
}
where for an event $\cE$, $\mathbbm{1}(\cE)$ denotes its indicator random variable. Since the information sent by the sensor to the controller for activating the downlink channel requires only 1 bit, we assume that no energy is consumed in this case. 
We assume that the estimator at controller receives the data packet from the sensor with a delay of one time step, so that the controller estimates the plant state at time $t$ by using the data packet generated by the sensor at time $t-1$. We use $\hat{x}(t)$ to denote the estimate of the plant state $x(t)$ at time $t$. It is updated recursively as follows:
\al{\label{x-hat}
\hat{x}(t+1) =
\begin{cases}
    ax(t) &\mbox{ if } u(t) =1, \\
   &\mbox{ and uplink channel is good},\\
    a\hat{x}(t) &\mbox{ otherwise},
\end{cases}
}
with $\hat{x}(0) =0$.
Let $\tau(t)$ denote the age of the data packet available with the controller. This is updated as follows: for $t \in \bZ_+$, we have,
\al{\label{tau}
\tau(t+1) =
\begin{cases}
    1 &\mbox{ if } u(t) =1,\\
    & \mbox{ and uplink channel is good},\\
    \tau(t)+1 &\mbox{ otherwise},
\end{cases}
}
where $\tau(0) = 0$.

We assume that there is perfect feedback on both the channels. Thus, controller sends an acknowledgment to the sensor upon receiving a packet, and similarly the actuator sends an ACK to the controller upon receiving a control packet. Due to this feedback, sensor has knowledge of $\tau(t)$ at time $t$. From~\eqref{x-hat} and~\eqref{tau}, we have,
\al{\label{x-hat-tau}
\hat{x}(t) = a^{\tau(t)} x(t-\tau(t)).
}
Next, when the downlink channel is activated, controller attempts to send a control packet to the actuator. Following a successful transmission, the actuator applies an input to the plant. This control input to the plant at time $t$ is denoted by $v(t)$, and is as follows,
\al{\label{control}
v(t) =
\begin{cases}
    K\hat{x}(t) &\mbox{ if } u(t) =2,\\
    & \mbox{ and downlink channel is good},\\
    0 &\mbox{ otherwise,}
\end{cases}
}
where $K$ is the controller gain. 
\begin{assumption}\label{assump:k}
	We assume that the controller gain satisfies the following~\cite{huang2019sense},~\cite{huang2019optimal},~\cite{o1981discrete},
    \al{
    a+K=0.
}
\end{assumption} 
Under the above assumption, the plant is said to be \emph{one-step controllable}~\cite{huang2019sense},~\cite{huang2019optimal}, which means that in the absence of process noise this controller will drive the plant state to 0.
Upon applying control, actuator discards this control packet.
The actuator sends an acknowledgment to the controller upon receiving a control packet at $t$. Note that the controller does not have any control packet from the time it delivers a control packet to actuator, until the time when sensor delivers a fresh packet to controller. Consequently, the downlink channel must not be activated in this time frame.~We assume that the controller instantaneously notifies the sensor of successful delivery of a control packet. We use $y(t) \in \cY:=\{0,1\}$ to denote whether a control packet is available $(y(t)=1)$ or not $(y(t)=0)$ at the controller at time $t$. Thus, upon delivery of a control packet at time $t$, we set $y(t)$ equal to $0$, while $y(t)$ is set equal to $1$ when a new sensor packet arrives at the controller.~Note that the sensor knows $y(t)$ while making a decision at time $t$. 
\al{\label{y}
y(t)=
\begin{cases}
    0 &\mbox{ if } u(t-1)=2,\\
    & \mbox{ and downlink channel is good},\\
    1 &\mbox{ if } u(t-1)=1,\\
    & \mbox{ and uplink channel is good},\\
    y(t-1) &\mbox{ otherwise.}
\end{cases}
}

Also, since the control input $v(s)=0$ for $s \in \{t-\tau(t), t-\tau(t)+1, \ldots,t-1\}$ under Assumption~\ref{assump:k} and from~\eqref{x-hat-tau},~\eqref{control}, we can write the evolution of $x(t)$~\eqref{system} as follows,
\al{\label{x-t-evolve}
x(t+1)=
\begin{cases}
    & \sum\limits_{s=t-\tau(t)}^{t} w(s) \mbox{ if } u(t)=2,\\
    & \qquad\mbox{ and downlink channel is good},\\
    & ax(t) + w(t) \mbox{ otherwise.}
\end{cases}
}

\subsection{Cost Function}\label{sec:obj-func}
For the model described in Section~\ref{sec:sys_mod}, the goal of the scheduler is to dynamically make decisions $\{u(t)\}_{t \in \bZ_+}$ so as to solve the following problem:
\al{\label{obj-disc}
\min_{\phi} \bE_{\phi} \left[\sum_{t=0}^{\infty} \beta^t x(t)^2\right],
}
where $\beta \in (0,1)$ is the discount factor, $\phi=\{\phi_t\}_{t \in \bZ_+}$ is a measurable policy such that $\phi_t : \cF_t \rightarrow u(t),$ where $\cF_t := \sigma(\{x(s),\tau(s),\hat{x}(s), y(s)\}_{s=0}^{t},\{u(s)\}_{s=0}^{t-1})$ is the information available with the scheduler at time $t$. $\bE_\phi$ is the expectation taken with respect to the measure induced by the policy $\phi.$

\section{MDP formulation}\label{sec:MDP formulation}
In this section, we formulate the optimization problem~\eqref{obj-disc} as a MDP. We then show in Section~\ref{val-iter} that this can be solved using the value iteration algorithm~\cite{hernandez2012discrete}. Section~\ref{folded MDP} introduces the folded MDP, which simplifies the analysis of this MDP, and this allows us to obtain structural results.

The dynamic optimization problem~\eqref{obj-disc} faced by the scheduler can be formulated as the following MDP:
\begin{enumerate}
    \item The state of the MDP at time $t$ is given by $(x(t),\tau(t),y(t),b(t)) \in \bR \times \bZ_+ \times \cY \times \bB$.~This follows from~\eqref{battery-evolv},~\eqref{tau},~\eqref{y}, 
    and~\eqref{x-t-evolve}.~The state-space of the MDP is $\cS:= \bR \times \bZ_+ \times \cY \times \bB$.
    
    \item The decision taken by the scheduler at time $t$ is $u(t) \in \{0,1,2\}$.~$u(t)=0$ denotes that neither uplink nor downlink channel is activated, $u(t)=1$ denotes that the uplink channel is activated at $t$, while $u(t)$ assumes the value $2$ if the downlink channel is activated.~The action-space of the MDP is $\cU:=\{0,1,2\}$. 
    
    \item The instantaneous cost function is $c(x,\tau,y,b,u)=x^2$. 
    \item Let $p(x_+,\tau_+,y_+,b_+ \mid x,\tau,y,b;u)$ denote the transition density function from the current state $(x,\tau,y,b)$, to the state $(x_+,\tau_+,y_+,b_+)$ at next time, when action $u$ is chosen. From~\eqref{tau},
    ~\eqref{x-t-evolve} we have that the transition density $p(x_+,\tau_+, y_+,b_+ \mid x,\tau,y,b;u)$ is as follows,

        i) For $u=0$: We have, 
        \al{
        & p(x_+,\tau_+,y_+,b_+ \mid x,\tau,y,b;0)\notag\\
        & =  p_{\ell} e^{-\frac{(x_+-ax)^2} {2\sigma^2}} \delta_{\min\{b+\ell,B\}}(b_+)  \notag\\
        & \times \delta_{\tau+1}(\tau_+) \delta_{y}(y_+) 
         .\label{df-0}
        }
        ii) For $u=1,b>0$: 
        We have,
        \al{
        & p(x_+,\tau_+,y_+,b_+ \mid x,\tau,y,b;1)\notag\\
        & =  p_{\ell} e^{-\frac{(x_+-ax)^2} {2\sigma^2}} \delta_{\min\{b+\ell-1,B\}}(b_+) \notag\\
        & \times \left(p\delta_{\tau+1}(\tau_+) \delta_{y}(y_+) 
         +(1-p) \delta_1(\tau_+) \delta_1(y_+)\right),\label{df-1}
        }
        iii) For $u=2$: We have the following for $y=1$,
        \al{
        & p(x_+,\tau_+,y_+,b_+ \mid x,\tau,1,b;2)= p_{\ell} \delta_{\min\{b+\ell,B\}}(b_+)\notag\\
        & \times \left(pe^{-\frac{(x_+-ax)^2}{2\sigma^2}} \delta_{\tau+1}(\tau_+) \delta_1(y_+)\right.\notag\\
        & \left. +(1-p)e^{-\frac{{x_+}^2}{2\varepsilon(\tau)}} \delta_{\tau+1}(\tau_+)\delta_0(y_+)\right),\label{df-2}
        }
        where $p$ is the packet drop probability, and 
        \al{
        \varepsilon(\tau) := \frac{1-a^{2(\tau +1)}}{1-a^2}.\label{var}     
        }       
\end{enumerate}

\subsection{Value Iteration}\label{val-iter}
Let $\Vb$ be the value function for the $\beta$-discounted cost problem~\eqref{obj-disc}, i.e., for $(x,\tau,y,b) \in \cS,$
\al{\label{val-func}
\Vb(x,\tau,y,b) = \min_{\phi} \Jb(x,\tau,y,b;\phi),
}
where,
\al{\label{cost-to-go}
\Jb(x,\tau,y,b;\phi) = \bE_{\phi} \left[\sum_{t=0}^{\infty} \beta^t x(t)^2\right].
}
We make the following assumption on $a,p$ which ensures that 
\begin{assumption}\label{assump-stable} 
There exists a scheduling policy $\phi$ under which $\Jb(x,\tau,y,b;\phi)$ is finite for each $(x,\tau,y,b)$.
\end{assumption}
Now, in order to show that the value iteration algorithm can be used to solve~\eqref{obj-disc}, we first need to verify whether our MDP satisfies certain conditions~\cite[p. 46]{hernandez2012discrete}. This is done next. We start with the following definitions. In what follows we denote, $b_+(\ell):=\min\{b+\ell-1,B\}$ and $b'_+(\ell):=\min\{b+\ell,B\}$ where we have suppressed its dependence upon $b$ in order to simplify notation.
\begin{definition}[Transition Law]\label{def:tran-law}
    Let $\mu$ denote the Lebesgue measure on $\bR$. We use $\{P(\cdot \mid x,\tau,y,b,u)\}$ to denote the controlled transition law that describes the transition probabilities. Then, for any Borel measurable subset $\cB$ of $\bR$, $b_+ \in \{b_+(\ell),b'_+(\ell)\}$, and $u \in \cU$, $P$ has a density $p(\cdot \mid \state; u)$~\eqref{df-0}-\eqref{df-2} with respect to $\mu$ which can be written as follows since $\mu$ on $\bR$ is $\sigma-$finite~\cite[Example C.6]{hernandez2012discrete},
    \al{
    & P((\newstate) \in \cB \times \bZ_+ \times \{0,1\} \times \bB \mid \state, u)\notag\\
        & = \sum_{\ell=1}^L\sum _{y_+ \in \cY} \sum_{\tau_+ \in \bZ_+} \notag\\
        & \times \int_{\cB} p(\newstate \mid \state; u)\mu(dx_+)\notag
    }
\end{definition}\vspace{1em}

\begin{definition}[Strongly continuous] 
The transition law $\{P(\cdot \mid x,\tau,y,b,u)\}$ is said to be strongly continuous for each $(x,\tau,y,u,b) \in \cS \times \cU$ if for every bounded measurable function $w: \cS \rightarrow \bR$, the function $w':\cS \times \cU \rightarrow \bR$ is continuous and bounded, where $w'(x,\tau,y,b,u)= \bE[w \mid x,\tau,y,b,u]$.
\end{definition}

\begin{lemma}\label{lemma:VI-conditions}
    Consider the MDP~\eqref{obj-disc}. Under Assumptions~\ref{assump:k} and~\ref{assump-stable}, the following properties hold:
    \begin{itemize}
        \item[{P1)}]  The instantaneous cost $c(x,\tau,y,b,u)=x^2$ is lower semicontinuous\footnote{A function $v: \cS \times \cU  \rightarrow \bR$ is lower semicontinuous on $\cS \times \cU$ if its sublevel sets $\{(x,\tau,y,b,u) \in \cS \times \cU \mid v(x,\tau,y,b,u) \leq z\}$ with $z \in \bR$ are closed in $\bR$.}, non-negative, and inf-compact\footnote{A function $v: \cS \times \cU \rightarrow \bR$ is inf-compact on $\cS \times \cU$ if for every $(x,\tau,y,b) \in \cS$ and $z \in \bR$, the set \{$u \in \cU \mid v(x,\tau,y,b,u) \leq z$\} is compact} on $\cS \times \cU$;
        
        \item[{P2)}]  
        The transition law $\{P(\cdot \mid x,\tau,y,b,u)\}$ is strongly continuous for each $(x,\tau,y,b,u)$;
        
    \end{itemize}
\end{lemma}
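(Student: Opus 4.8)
The plan is to verify the two properties separately, treating the discrete coordinates $\tau,y,b,u$ and the continuous coordinate $x$ differently: $\bZ_+\times\cY\times\bB\times\cU$ carries the discrete topology, $\bR$ the usual one, and $\cS\times\cU$ the product topology, so joint continuity of a function on $\cS\times\cU$ reduces to continuity in $x$ for each fixed value of the discrete tuple. For \textbf{P1}, non-negativity is immediate since $c(x,\tau,y,b,u)=x^2\ge0$. Lower semicontinuity follows because $(x,\tau,y,b,u)\mapsto x^2$ is in fact \emph{continuous} on $\cS\times\cU$ (it is the projection onto the $\bR$-factor followed by $x\mapsto x^2$, continuity in the discrete factors being automatic); hence for $z\ge0$ the sublevel set is $[-\sqrt z,\sqrt z]\times\bZ_+\times\cY\times\bB\times\cU$, which is closed, and it is empty for $z<0$. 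Inf-compactness is trivial: $\cU=\{0,1,2\}$ is finite, so for any fixed $(x,\tau,y,b)$ and $z$, the set $\{u\in\cU:x^2\le z\}$ is a subset of a finite discrete space, hence compact.

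For \textbf{P2}, fix a bounded measurable $w:\cS\to\bR$ and set $w'(\state,u)=\bE[w(\newstate)\mid\state,u]$. Boundedness is clear since $w'$ is an average of values of $w$, so $\|w'\|_\infty\le\|w\|_\infty$. For continuity, by the reduction above it suffices to fix $(\tau,y,b,u)$ and show $x\mapsto w'(\state,u)$ is continuous on $\bR$. Using \eqref{df-0}--\eqref{df-2}, $w'(\state,u)$ is a finite sum (over $\ell\in\cL$ and over $\tau_+,y_+$) of integrals $\int_{\bR}w(x_+,\tau_+,y_+,b_+)\,q(x_+\mid x)\,dx_+$, where $b_+\in\{\min\{b+\ell,B\},\min\{b+\ell-1,B\}\}$ — this $\min$ only reindexes the discrete $b_+$-coordinate and plays no role in the $x$-continuity argument — and where each kernel $q(\cdot\mid x)$ is either independent of $x$ (the ``reset'' branch in \eqref{df-2}, whose density is proportional to $e^{-x_+^2/(2\varepsilon(\tau))}$) or a Gaussian density $\tfrac{1}{\sqrt{2\pi}\,\sigma}e^{-(x_+-ax)^2/(2\sigma^2)}$, the normalising constant (suppressed in \eqref{df-0}--\eqref{df-2}) being irrelevant here. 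In the first case the integral does not depend on $x$, hence is trivially continuous. In the second case, substituting $z=x_+-ax$ shows the integral equals $\int_{\bR}w(z+ax,\tau_+,y_+,b_+)\,g_\sigma(z)\,dz$, i.e. a convolution of the bounded function $w(\cdot,\tau_+,y_+,b_+)$ with the centered Gaussian density $g_\sigma\in L^1(\bR)$.

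The only real obstacle is that $w$ is merely measurable, not continuous, so one cannot pass to the limit pointwise under the integral sign. The way around this is the standard fact that the convolution of an $L^\infty$ function with an $L^1$ function is bounded and uniformly continuous: for $x,x'\in\bR$,
\[
\bigl|w'(x,\tau,y,b,u)-w'(x',\tau,y,b,u)\bigr|\le\|w\|_\infty\,\bigl\|\,g_\sigma(\cdot)-g_\sigma(\cdot-a(x-x'))\,\bigr\|_{L^1(\bR)},
\]
and the right-hand side tends to $0$ as $x'\to x$ by continuity of translation in $L^1(\bR)$. Since a finite sum of continuous functions is continuous, $x\mapsto w'(\state,u)$ is continuous for each fixed discrete tuple, hence $w'$ is continuous (and bounded) on $\cS\times\cU$, which is exactly strong continuity. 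Assembling P1 and P2 then puts the MDP in the framework of~\cite[p.~46]{hernandez2012discrete}, as desired; Assumption~\ref{assump:k} enters only through the simplified state evolution \eqref{x-t-evolve} used to write \eqref{df-0}--\eqref{df-2}, and Assumption~\ref{assump-stable} is not needed for P1--P2 themselves but for the convergence statement these conditions feed into.
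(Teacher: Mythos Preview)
Your proof is correct. For P1 you and the paper argue identically (continuity $\Rightarrow$ lower semicontinuity; finite $\cU$ $\Rightarrow$ inf-compactness). For P2 the paper simply observes that the transition law admits a density with respect to Lebesgue measure and then invokes \cite[Example~C.6]{hernandez2012discrete} to conclude strong continuity, whereas you unpack this into a self-contained argument: each branch of the kernel is either $x$-independent or a Gaussian convolution, and you use the $L^1$-continuity of translations to get uniform continuity of $x\mapsto w'(\state,u)$. Your route is longer but avoids the external reference and makes transparent exactly where the Gaussian structure is used; the paper's route is terser but requires the reader to check that the densities \eqref{df-0}--\eqref{df-2} indeed fit the hypotheses of the cited example.
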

\begin{proof}
        {P1)} follows since the instantaneous cost function is continuous, and hence it is lower semicontinuous. Inf-compactness follows since the action set $\cU =\{0,1,2\}$ is finite in our case.
        
        {P2)} We can write $P$ as follows for any Borel measurable subset $\cB$ of $\bR$, $b_+ \in \{b_+(\ell),b'_+(\ell)\}$, and $u \in \cU$,
        \al{
        & P((\newstate) \in \cB \times \bZ_+ \times \{0,1\} \times \bB \mid \state, u)\notag\\
        & =\sum_{\ell=1}^L \sum _{y_+ \in \{0,1\}} \sum_{\tau_+ \in \bZ_+} \notag\\
         & \times \int_{\cB} p(\newstate \mid \state; u)\, dx_+,\notag
        }
        where the equality follows from definition~\ref{def:tran-law} and because $\mu(dx_+) = dx_+$.
        
        Then, P2) follows from the definition of $p$~\eqref{df-0}-\eqref{df-2}~\cite[Example C.6]{hernandez2012discrete}.
\end{proof}
Lemma~\ref{lemma:VI-conditions}, when combined with~\cite[Lemma 4.2.8, Theorem 4.2.3]{hernandez2012discrete} allows us to use value iteration to solve~\eqref{obj-disc}.~This is shown in the following proposition.
It shows that the iterates generated by the value iteration algorithm converge to the true value function $\Vb$.
~It also shows the existence of an optimal deterministic stationary policy (Proposition~\ref{prop:valit}c)). We use $\Vb_n$ to denote the value iterates at stage $n \in \bZ_+$. We start by establishing these iterates, i.e., for all $(x,\tau,y,b) \in \cS$ with $b=0,y=0$,
\al{
        & \Vb_{n+1}(x,\tau,0,0) = \Qb_{n+1}(x,\tau,0,0 ;0),\label{Vn-b0}
        }
        while for $b=0,y=1,$
        \al{
        & \Vb_{n+1}(x,\tau,1,0) = \min_{u \in \{0,2\}} \Qb_{n+1}(x,\tau,1,0 ;u),\label{Vn-b0-y1}
        }
        and for $b>0$,
        \al{
        & \Vb_{n+1}(x,\tau,0,b) = \min_{u \in \{0,1\}}\Qb_{n+1}(x,\tau,0,b ; u),\label{Vn-y0}\\
        & \Vb_{n+1}(x,\tau,1,b) = \min_{u \in \cU} \Qb_{n+1} (x,\tau,1,b;u),\label{Vn-y1}
        }
        where for $u=0$,
        \al{
        & \Qb_{n+1}(x,\tau,y,b;0)= x^2 + \beta \sum_{\ell=1}^L p_{\ell}\notag\\
        & \times \itgR e^{-\frac{(x_+ -ax)^2}{2\sigma^2}} \Vb_n(x_+,\tau+1,y,b'_+(\ell))\,dx_+ \label{Qn0},
        }
        while for $u=1,b>0$,
        \al{
        & \Qb_{n+1}(x,\tau,y,b;1)= x^2 + \beta \sum_{\ell=1}^L p_{\ell}\notag\\
        &  \times \biggl(  p \itgR e^{-\frac{(x_+ -ax)^2}{2\sigma^2}} \Vb_n(x_+,\tau+1,y,b_+(\ell))\,dx_+ \biggr.\notag\\
        & \biggl. + (1-p)\itgR e^{-\frac{(x_+ -ax)^2}{2\sigma^2}} \Vb_n(x_+,1,1,b_+(\ell))\,dx_+\biggr)\label{Qn1},
        }
        and for $u=2,y=1$,
        \al{
        & \Qb_{n+1}(x,\tau,1,b;2) = x^2 + \beta \sum_{\ell=1}^L p_{\ell} \notag\\
        & \times \biggl(p\itgR e^{-\frac{(x_+ -ax)^2}{2\sigma^2}} \Vb_n(x_+,\tau+1,1,b'_+(\ell))\,dx_+ \notag\\
        & +  (1-p)\itgR e^{-\frac{x_+^2}{\varepsilon(\tau)}} \Vb_n(x_+,\tau+1,0,b'_+(\ell))\,dx_+\biggr),\label{Qn2-y1}
        }
        with initialization,
        \al{
        \Vb_0(\state) := 0, \label{v0}
        }

\begin{proposition}\label{prop:valit}
    Consider the MDP~\eqref{obj-disc} with transition density function $p$~\eqref{df-0}-\eqref{df-2}. If Assumptions~\ref{assump:k} and~\ref{assump-stable} hold, then,
    \begin{itemize}
        \item[a)] The value iterates $\Vb_n, n \in \bZ_+$ converges to $\Vb$ i.e., for each $(\state) \in \cS, \lim\limits_{n \rightarrow \infty} \Vb_n(\state) = \Vb(\state)$.
        \item[b)] The value function $\Vb$ satisfies the following optimality equation, for each $(\state) \in \cS$ with $b=0,y=0$,
        \al{
        & \Vb(x,\tau,0,0) = \Qb(x,\tau,0,0 ; 0),\label{V-b0}
        }
        while for $b=0,y=1$,
        \al{
        & \Vb(x,\tau,1,0) = \min_{u \in \{0,2\}}\Qb(x,\tau,1,0 ; u),\label{V-b0-y1}
        }
        and for $b>0$,
        \al{
        & \Vb(x,\tau,0,b) = \min_{u \in \{0,1\}}\Qb(x,\tau,0,b ; u),\label{V-y0}\\
        & \Vb(x,\tau,1,b) = \min_{u \in \cU} \Qb (x,\tau,1,b;u),\label{V-y1}
        }
        where for $b_+ \in \{b_+(\ell),b'_+(\ell)\}$ and from~\eqref{df-0}-\eqref{df-2} we have,
        \al{
        & \Qb(\state;u) = x^2 + \beta \sum_{\ell=1}^L \sum_{y_+ \cY} \sum_{\tau_+ \in \bZ_+} \notag\\
        &  \times \itg \bigl(p(\newstate \mid \state;u) \bigr.\notag\\
        & \hspace{7em}\bigl. \times \Vb(\newstate)\bigr)\,dx_+, \label{Q}
        }
        

        \item[c)] There exists an optimal deterministic stationary policy $\phi\ust$ that minimizes the right-hand side of~\eqref{V-y0} and~\eqref{V-y1} for each $(\state) \in \cS.$
    \end{itemize}
\end{proposition}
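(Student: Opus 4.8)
The plan is to recognize~\eqref{obj-disc} as a \emph{lower semicontinuous positive} Markov control model and to invoke the standard value-iteration theory for such models~\cite[Lemma 4.2.8, Theorem 4.2.3]{hernandez2012discrete}. Lemma~\ref{lemma:VI-conditions} supplies exactly the two hypotheses needed: P1 (non-negativity, lower semicontinuity and inf-compactness of the one-stage cost $x^2$ --- inf-compactness being immediate because $\cU=\{0,1,2\}$ is finite and the admissible-action sets, which only remove $u=1$ when $b=0$ and $u=2$ when $y=0$, do not depend on the continuous coordinate $x$) and P2 (strong continuity of the transition law), while Assumption~\ref{assump-stable} guarantees $\Vb(\state)<\infty$ at every state so the model is non-degenerate. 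Rather than re-deriving the cited theorem, I would run value iteration from $\Vb_0\equiv0$ as in~\eqref{v0} and verify the three claims by hand; since there is no bounded-cost structure, everything rests on monotone convergence rather than a contraction.

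First I would establish monotone convergence of the iterates. Since $\Vb_0\equiv0$ and $x^2\ge0$ we have $\Vb_1\ge\Vb_0$, and because the operator defined by~\eqref{Vn-b0}--\eqref{Qn2-y1} is monotone, induction gives $\Vb_n\uparrow$, so the pointwise limit $V_\infty:=\lim_n\Vb_n$ exists in $[0,\infty]$. Moreover $\Vb_n(\state)$ equals the value of the $n$-horizon truncation of~\eqref{obj-disc} with zero terminal cost (finite-horizon dynamic programming), and since the stage costs are non-negative this is $\le\Jb(\state;\phi)$ for every $\phi$; hence $\Vb_n\le\Vb<\infty$ and $V_\infty\le\Vb$. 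I would then show by induction that each $\Vb_n$ is lower semicontinuous, so that $V_\infty$ is lower semicontinuous as an increasing limit: the $x^2$ term is continuous, and for the integral terms, given a non-negative lower semicontinuous $w$ on $\cS$ one writes $w=\sup_m (w\wedge m)$, applies P2 to the bounded measurable truncations to get that $(\state,u)\mapsto\bE[(w\wedge m)(\newstate)\mid\state,u]$ is continuous, and then takes the supremum over $m$ (monotone convergence), so the expectation of $w$ is lower semicontinuous; summing the finitely many terms over $\ell,\tau_+,y_+$ and minimizing over the finite admissible-action set preserves lower semicontinuity. (The transition density is an explicit Gaussian kernel in $x_+$ that depends smoothly on $x$, so this step can alternatively be argued directly via Fatou/Scheff\'e.)

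Next I would pass to the limit in the recursion. Applying monotone convergence to each integral in~\eqref{Qn0}--\eqref{Qn2-y1}, and using that minimization over the finite sets $\{0\},\{0,2\},\{0,1\},\cU$ commutes with increasing limits, shows that $V_\infty$ satisfies the optimality equations~\eqref{V-b0}--\eqref{V-y1} with $\Qb$ given by~\eqref{Q}; this is (b) with $V_\infty$ in place of $\Vb$. By P1 (inf-compactness, trivial here since $\cU$ is finite) the set $\argmin_{u}\Qb(\state;u)$ over admissible actions is nonempty at every state, and since the maps $\Qb(\cdot;u)$ are lower semicontinuous, hence measurable, and the admissible-action correspondence is piecewise constant in the discrete coordinates, there is a measurable minimizing selector $\phi\ust$. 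Iterating the optimality equation along the trajectory generated by $\phi\ust$ gives, for every $n$, $V_\infty(\state)=\bE_{\phi\ust}\bigl[\sum_{t=0}^{n-1}\beta^t x(t)^2\bigr]+\beta^n\bE_{\phi\ust}[V_\infty(\text{state at time }n)]\ge\bE_{\phi\ust}\bigl[\sum_{t=0}^{n-1}\beta^t x(t)^2\bigr]$; letting $n\to\infty$ (monotone convergence again) yields $V_\infty(\state)\ge\Jb(\state;\phi\ust)\ge\Vb(\state)$. Combined with $V_\infty\le\Vb$ this forces $V_\infty=\Vb$ and $\Jb(\cdot;\phi\ust)=\Vb$, which simultaneously gives (a), (b), and (c).

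The main obstacle is the combination of an unbounded one-stage cost ($x^2$) with a non-compact state coordinate ($x\in\bR$): the elementary contraction/fixed-point form of value iteration for bounded-cost MDPs does not apply, so one is forced to initialize at $\Vb_0\equiv0$ and exploit monotonicity. Within that route the genuinely delicate point is showing that the dynamic-programming operator sends (possibly unbounded) non-negative lower semicontinuous functions to lower semicontinuous functions, which is precisely where strong continuity (P2) enters, via bounded truncations and monotone convergence. Everything else --- inf-compactness, the measurable selection, and the two-sided inequality identifying $V_\infty$ with $\Vb$ --- is routine once the action set is known to be finite and the cost non-negative, with Assumption~\ref{assump-stable} needed only to keep $\Vb$ (hence $V_\infty$) finite-valued.
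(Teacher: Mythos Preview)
Your proposal is correct and follows essentially the same approach as the paper: both invoke Lemma~\ref{lemma:VI-conditions} (properties P1 and P2) together with Assumption~\ref{assump-stable} and then appeal to \cite[Lemma 4.2.8, Theorem 4.2.3]{hernandez2012discrete} for lower semicontinuous positive models. The only difference is that the paper simply cites those results without further argument, whereas you additionally sketch their proof (monotone convergence of $\Vb_n\uparrow V_\infty$, preservation of lower semicontinuity under the dynamic-programming operator via truncation and P2, existence of a measurable minimizing selector over the finite admissible-action sets, and the sandwich $V_\infty\le\Vb\le\Jb(\cdot;\phi\ust)\le V_\infty$); this extra detail is correct but goes beyond what the paper provides.
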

\subsection{Folding the MDP}\label{folded MDP}
In this section, we construct a folded MDP~\cite{chakravorty2018sufficient} using the original MDP~\eqref{obj-disc}. The folded MDP has a state-space $\bR_+ \times \bZ_+ \times \{0,1\} \times \bB$, in contrast to the original MDP's state-space $\bR \times \bZ_+ \times \{0,1\} \times \bB$. The folded MDP is easier to analyse since now the plant state assumes only non-negative values, while in the original MDP it takes both negative and non-negative values. We will prove that the folded MDP is equivalent to the original MDP on $\bR_+ \times \bZ_+ \times \{0,1\}\times \bB$ for the purpose of optimizing our cost function. This will allow us to work with the folded MDP instead of the original MDP for subsequent analysis. 
We now show that the value functions $\Vb$~\eqref{V-b0}-\eqref{V-y1} of the original MDP~\eqref{obj-disc} are even with respect to the plant state. This property is instrumental in the construction of the folded MDP.
\begin{proposition}\label{prop:even}
    The functions $\{\Vb(\cdot,\tau,y,b)\}$, $\{\Qb(\cdot,\tau,y,b;u)\}$ are even, i.e., we have the following for all $(\state) \in \cS, u \in \cU$, 
    \nal{
    &\Qb(\state;u)=\Qb(|x|,\tau,y,b;u), \notag\\
    & \Vb(\state)=\Vb(|x|,\tau,y,b).
    }
    Thus, if $\phi\ust(\cdot,\tau,y,b)$ is an optimal policy, then 
    \nal{
    \phi\ust(\state)=\phi\ust(|x|,\tau,y,b).
    }
\end{proposition}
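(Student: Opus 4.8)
The plan is to prove the evenness of $\Vb$ and $\Qb$ by induction on the value iteration index $n$, using the fact (Proposition~\ref{prop:valit}a)) that $\Vb_n \to \Vb$ pointwise; evenness of the limit then follows from evenness of each iterate. The base case is trivial since $\Vb_0 \equiv 0$ is even in $x$. For the inductive step, I would assume $\Vb_n(\cdot,\tau,y,b)$ is even for every $(\tau,y,b)$ and show each of the $Q$-functions~\eqref{Qn0},~\eqref{Qn1},~\eqref{Qn2-y1} is even in $x$; since $\Vb_{n+1}$ is obtained from these via~\eqref{Vn-b0}--\eqref{Vn-y1} by pointwise minimization over $u$ (which does not touch $x$), evenness of $\Vb_{n+1}$ is immediate once evenness of the $\Qb_{n+1}(\cdot,\tau,y,b;u)$ is established.

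The heart of the argument is the change of variables $x_+ \mapsto -x_+$ inside the Gaussian integrals. For $u=0$ in~\eqref{Qn0}, replacing $x$ by $-x$ and then substituting $x_+ \to -x_+$ gives $\int_{\bR} e^{-(x_+ - ax)^2/2\sigma^2} \Vb_n(x_+,\tau+1,y,b'_+(\ell))\,dx_+$ back, because the kernel satisfies $e^{-(-x_+ - a(-x))^2/2\sigma^2} = e^{-(x_+ - ax)^2/2\sigma^2}$ and $\Vb_n(-x_+,\dots) = \Vb_n(x_+,\dots)$ by the inductive hypothesis; the leading $x^2$ term is trivially even. The same two-step manipulation works for $u=1$ in~\eqref{Qn1} termwise (both the $p$ and $1-p$ branches have Gaussian kernels of the same form), and for $u=2$ in~\eqref{Qn2-y1}: in the first branch the kernel is again $e^{-(x_+-ax)^2/2\sigma^2}$, and in the second branch the kernel $e^{-x_+^2/\varepsilon(\tau)}$ does not depend on $x$ at all and is manifestly even in $x_+$, so that integral is a constant in $x$. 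Hence every $\Qb_{n+1}(\cdot,\tau,y,b;u)$ is even, closing the induction.

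To conclude, taking $n\to\infty$ and invoking Proposition~\ref{prop:valit}a) gives $\Vb(-x,\tau,y,b) = \lim_n \Vb_n(-x,\tau,y,b) = \lim_n \Vb_n(x,\tau,y,b) = \Vb(x,\tau,y,b)$, and then plugging the even $\Vb$ into~\eqref{Q} and repeating the change-of-variables argument one more time shows $\Qb(\cdot,\tau,y,b;u)$ is even as well (equivalently, pass to the limit in the already-established evenness of $\Qb_{n+1}$). Finally, since $\Vb(x,\tau,y,b) = \min_u \Qb(x,\tau,y,b;u)$ and each $\Qb(x,\tau,y,b;u) = \Qb(|x|,\tau,y,b;u)$, the minimizing action at $x$ coincides with the minimizing action at $|x|$, so any optimal stationary policy $\phi\ust$ satisfies $\phi\ust(\state) = \phi\ust(|x|,\tau,y,b)$. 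The only mild subtlety — not really an obstacle — is making sure the change of variables is justified despite $\Vb_n$ being unbounded; this is fine because each $\Vb_n$ is finite-valued and non-negative (it is a finite sum of non-negative terms under Assumption~\ref{assump-stable}), so the integrals are well-defined in $[0,\infty]$ and the substitution $x_+\to -x_+$ is a measure-preserving bijection of $\bR$.
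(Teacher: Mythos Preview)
Your proposal is correct and follows essentially the same approach as the paper: induction on the value-iteration index using the change of variables $x_+ \mapsto -x_+$ in the Gaussian integrals together with the inductive hypothesis that $\Vb_n(\cdot,\tau,y,b)$ is even, then passing to the limit via Proposition~\ref{prop:valit}a). The paper spells out Cases $u=1$ and $u=2$ explicitly and leaves $u=0$ to the reader, just as you do, and deduces evenness of $\phi\ust$ from evenness of $\Qb$ in the same way.
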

\begin{proof}
        Note that from Proposition~\ref{prop:valit} we have that $\lim_{n \rightarrow \infty} \Vb_n(\state) = \Vb(\state)$ for each $(\state) \in \cS$. Hence it suffices to show that $\Vb_n$ and $\Qb_n, n \in \bZ_+$ are even. We will use induction to show this property for $\Vb_n,\Qb_n,$. The case for $n=0$ is true since $\Vb_0(\state)=0$~\eqref{v0} for each $(\state) \in \cS$. Next, assume that the functions $\Vb_k(\cdot,\tau,y,b), k=1,2,\ldots, n$ are even. We will now show that $\Qb_{n+1}(\cdot,\tau,y,b; u)$ is even for each $(\tau,y,b) \in \bZ_+ \times \{0,1\} \times \bB$ and $u \in \cU$. Consider the following two cases:
        
    Case i): For $u=1, b>0$ we have,
    \al{
    & \Qb_{n+1}(-x,\tau,y,b;1) = x^2 + \beta \sum_{\ell=1}^L p_{\ell}\notag\\
    & \times \biggl( p\itgR e^{-\frac{(x_+ +ax)^2}{2\sigma^2}} \Vb_n(x_+,\tau+1,y,b_+(\ell))\,dx_+ \notag\\
    & + (1-p)\itgR e^{-\frac{(x_+ +ax)^2}{2\sigma^2}} \Vb_n(x_+,1,1,b_+(\ell))\,dx_+ \biggr)\notag\\
    & = x^2 + \beta \sum_{\ell=1}^L p_{\ell}\notag\\
    & \times \biggl(p\itgR e^{-\frac{(-x' +ax)^2}{2\sigma^2}} \Vb_n(-x',\tau+1,y,b_+(\ell))\,dx' \notag\\
    & +  (1-p)\itgR e^{-\frac{(-x' +ax)^2}{2}} \Vb_n(-x',1,1,b_+(\ell))\,dx'\biggr) \notag\\
    & = x^2 + \beta \sum_{\ell=1}^L p_{\ell}\notag\\
    & \times \biggl(p\itgR e^{-\frac{(x' -ax)^2}{2\sigma^2}} \Vb_n(x',\tau+1,y,b_+(\ell))\,dx' \notag\\
    & + (1-p)\itgR e^{-\frac{(x' -ax)^2}{2\sigma^2}} \Vb_n(x',1,1,b_+(\ell))\,dx'\biggr) \notag\\
    & = \Qb_{n+1}(x,\tau,y,b;1),\notag
    }
    where the first equality follows from~\eqref{Qn1}. The second and third equality follows from a change of variables $x_+ = -x'$, and from the induction hypothesis that $\Vb_n(\cdot,\tau,y,b)$ is even, respectively. 
    Hence, $\Qb_{n+1}(\cdot,\tau,y,b;1)$ is even for each $(\tau,y) \in \bZ_+ \times \{0,1\}$ and $b>0$.\\
    Case ii): For $u=2$ we have,
    \al{
    & \Qb_{n+1}(-x,\tau,1,b;2)=x^2 + \beta \sum_{\ell=1}^L p_{\ell}\notag\\
    & \times \biggl( p\itgR e^{-\frac{(x_+ +ax)^2}{2\sigma^2}} \Vb_n(x_+,\tau+1,1,b'_+(\ell))\,dx_+ \notag\\
    & +  (1-p)\itgR e^{-\frac{x_+^2}{\varepsilon(\tau)}} \Vb_n(x_+,\tau+1,0,b'_+(\ell))\,dx_+ \biggr)\notag\\
    & = x^2 + \beta \sum_{\ell=1}^L p_{\ell}\notag\\
    & \times \biggl(p\itgR e^{-\frac{(-x' +ax)^2}{2\sigma^2}} \Vb_n(-x',\tau+1,1,b'_+(\ell))\,dx' \notag\\
    & + (1-p)\itgR e^{-\frac{x'^2}{\varepsilon(\tau)}} \Vb_n(-x',\tau+1,0,b'_+(\ell))\,dx'\biggr) \notag\\
    & =\Qb_{n+1}(x,\tau,1,b;2),\notag
    }
    where the first equality follows from~\eqref{Qn2-y1}. The second equality follows from a change of variables, while the third equality follows from our induction hypothesis that $\Vb_n(\cdot,\tau,1,b)$ is even.~This shows that $\Qb_{n+1}(\cdot,\tau,1,b;2)$ is even.
    
    Similarly, we can show that $\Qb_{n+1}(\cdot,\tau,y,b;0), y \in \cY$ are also even.~Now, it follows from~\eqref{Vn-b0} that $\Vb_{n+1}(\cdot,\tau,0,0)$ is even. 
    $\Vb_{n+1}(\cdot,\tau,1,0)$ is even since it is equal to the pointwise minimum of two even functions $\Qb_{n+1}(\cdot,\tau,1,0;0)$ and $\Qb_{n+1}(\cdot,\tau,1,0;2)$~\eqref{Vn-b0-y1}. Similarly, it follows from~\eqref{Vn-y0} and~\eqref{Vn-y1} that for $b>0$, $\Vb_{n+1}(\cdot,\tau,0,b)$ and $\Vb_{n+1}(\cdot,\tau,1,b)$ are even. 
    ~This completes the induction step and proves our claim. Moreover, from Proposition~\ref{prop:valit}, we have that $\phi\ust(\cdot,\tau,1,0) \in \argmin_{u \in\{0,2\}} \Qb(\cdot,\tau,1,0;u)$, and for $b>0$, $\phi\ust(\cdot,\tau,0,b) \in \argmin_{u \in\{0,1\}} \Qb(\cdot,\tau,0,b;u)$ and $\phi\ust(\cdot,\tau,1,b) \in \argmin_{u \in \cU} \Qb(\cdot,\tau,1,b;u)$. Hence, $\phi\ust(\cdot,\tau,y,b)$ is also even. This completes the proof.         
\end{proof}
We now construct the folded MDP~\cite{chakravorty2018sufficient}. We use $\Tx, \Ttau, \Ty, \Tb, \Tu$ and $\Tph$ to denote the plant state, the age of the data packet available with the controller, the availability of control packet, the energy level of the battery, action, and policy, respectively for the folded MDP.
\begin{definition}[Folded MDP]
    Given the original MDP~\eqref{obj-disc} with transition density function $p$~\eqref{df-0}-\eqref{df-2}, we define the folded MDP with state-space denoted by $\TcS:=\bR_+ \times \bZ_+ \times \{0,1\} \times \bB$, control space $\TcU:=\{0,1,2\}$, and transition density function $\Tp$ as follows,
    \al{\label{df-fold}
    & \Tp(\Tnewstate \mid \Tstate; \Tu) \notag\\
    & = p(\Tnewstate \mid \Tstate; \Tu) \notag\\
    &  + p(-\Tx_+,\Ttau,\Ty_+, \Tb_+ \mid \Tstate; \Tu),
    }
    where $(\Tnewstate),(\Tstate) \in \TcS$, and $\Tu \in \TcU$. The objective function~\eqref{obj-disc} remains the same for the folded MDP, with instantaneous cost given by $\Tc(\Tx,\Ttau,\Ty,\Tb):=\Tx^2$. 
 
\end{definition}
Define,
\al{ 
	&\psi(v,z):=e^{-\frac{v^2}{2z}} ,\notag\\
    &\varphi(v,s,z):=\psi(v-s,z) + \psi(v+s,z).\notag
    }
Next, we can show that under Assumptions~\ref{assump:k} and~\ref{assump-stable}, the folded MDP defined above also satisfies the properties P1)-P3) stated in Lemma~\ref{lemma:VI-conditions}. The proof is similar to Lemma~\ref{lemma:VI-conditions}. Consequently, the conclusions of Proposition~\ref{prop:valit} also hold for the folded MDP, i.e. we can use value iteration to solve the folded MDP, and there exists an optimal stationary deterministic policy. In what follows, we denote $\Tb_+(\ell):=\min\{\Tb+\ell-1,B\}$ and $\Tb'_+(\ell):=\min\{\Tb+\ell,B\}$ where we have suppressed its dependence upon $\Tb$ in order to simplify notation.~Let $\TVb_n$ be the iterates generated during stage $n$ when value iteration is applied to solve the folded MDP. We have for $\Tb=0,\Ty=0$,
\al{
& \TVb_{n+1}(\Tx,\Ttau,0,0)=\TQb_{n+1}(\Tx,\Ttau,0;0)\label{Vn-b0-fold}
}
while for $\Tb=0,\Ty=1$,
\al{
& \TVb_{n+1}(\Tx,\Ttau,1,0)= \min_{\Tu \in \{0,2\}}\TQb_{n+1}(\Tx,\Ttau,y;u)\label{Vn-b0-y1-fold}
}
and for $\Tb>0$,
\al{
& \TVb_{n+1}(\Tx,\Ttau,0,\Tb)= \min_{\Tu \in \{0,1\}} \TQb_{n+1}(\Tx,\Ttau,0,\Tb;\Tu)\label{Vn-y0-fold}\\
& \TVb_{n+1}(\Tx,\Ttau,1,\Tb) = \min_{\Tu \in \TcU} \TQb_{n+1}(\Tx,\Ttau,1,\Tb;\Tu),\label{Vn-y1-fold}
}
where $(\Tstate) \in \TcS$ and $n \in \bZ_+$, and for $\Tu=0$,
\al{
        & \TQb_{n+1}(\Tx,\Ttau,\Ty,\Tb;0) = \Tx^2 + \beta\sum_{\ell=1}^L p_{\ell} \notag\\
        & \times \sum_{\Ty_+ \in \{0,1\}}\sum_{\Ttau_+ \in \bZ_+} \int_{\bR_+} \bigl(\Tp(\Tnewstate \mid \Tx,\Ttau,y,\Tb;0) \bigr.\notag\\
        & \hspace{11em}\bigl. \times \TVb_{n}(\Tnewstate)\bigr)\, d\Tx_+\notag\\
        & = \Tx^2 + \beta\sum_{\ell=1}^L p_{\ell} \notag\\
        & \times \itg \varphi(\Tx_+,a\Tx,\sigma^2) \TVb_n(\Tx_+,\Ttau+1,\Ty,\Tb'_+(\ell))\, d\Tx_+,\label{Qn0-fold}
        }

while for $\Tu=1,\Tb>0$,
\al{
        & \TQb_{n+1}(\Tx,\Ttau,\Ty,\Tb;1) = \Tx^2 + \beta\sum_{\ell=1}^L p_{\ell} \biggl( p\itg \varphi(\Tx_+,a\Tx,\sigma^2)\notag\\
        & \times  \TVb_n(\Tx_+,\Ttau+1,\Ty,\Tb_+(\ell))\, d\Tx_+ +  (1-p) \notag\\
        & \times \itg \varphi(\Tx_+,a\Tx,\sigma^2) \TVb_n(\Tx_+,1,1,\Tb_+(\ell))\, d\Tx_+\biggr), \label{Qn1-fold}
        }
        and for $\Tu=2,\Ty=1$,
        \al{
        & \TQb_{n+1}(\Tx,\Ttau,1;2) =\Tx^2 + \beta\sum_{\ell=1}^L p_{\ell} \biggl( p \itg \varphi(\Tx_+,a\Tx,\sigma^2)\notag\\
        &  \times \TVb_n(\Tx_+,\Ttau+1,1,\Tb'_+(\ell))\, d\Tx_+ +2(1-p)\notag\\
        & \times \itg \psi(\Tx_+,\varepsilon(\Ttau)) \TVb_n(\Tx_+,\Ttau+1,0,\Tb'_+(\ell))\, d\Tx_+\biggr).\label{Qn2-y1-fold}
        }
        In the above, we have,
        \al{
        \TVb_0(\Tstate) := 0, \label{v0-fold}
    }
where~\eqref{Qn0-fold}-\eqref{Qn2-y1-fold} follows from the definition of $\Tp$~\eqref{df-fold}, and from~\eqref{var}, $\varepsilon(\Ttau)=(1-a^{2(\Ttau +1)})/(1-a^2)$.

The following proposition follows from~\cite[Lemma 4.2.8, Theorem 4.2.3]{hernandez2012discrete} and describes the value function $\TVb$ (analogous to \eqref{val-func} for the original MDP~\eqref{obj-disc}), and the convergence of value iterates $\TVb_n$ to $\TVb$ for the folded MDP. These results are analogous to those shown for the original MDP in Proposition~\ref{prop:valit}.
\begin{proposition}\label{prop:valit-fold}
    Consider the folded MDP $(\TcS,\TcU,\Tp,\Tc)$. If Assumptions~\ref{assump:k} and~\ref{assump-stable} hold, then,
    \begin{itemize}
        \item[a)] The value iterates $\TVb_n, n \in \bZ_+$~\eqref{Vn-y0-fold}-\eqref{Vn-y1-fold} converge to $\TVb$ i.e., for each $(\Tstate) \in \TcS$, $\lim\limits_{n \rightarrow \infty} \TVb_n(\state)=\TVb(\Tstate)$. 
        \item[b)] The value function $\TVb$ satisfies the following optimality equation, for each $(\Tstate) \in \TcS$ with $\Tb=0,\Ty=0$,
        \al{
        & \TVb(\Tx,\Ttau,\Ty,0) = \TQb(\Tx,\Ttau,\Ty,0 ; 0),\label{V-b0-fold}
        }
        while for $\Tb=0,\Ty=1$,
        \al{
        & \TVb(\Tx,\Ttau,1,0) = \min_{\Tu \in \{0,1\}}\TQb(\Tx,\Ttau,1,0 ; 0),\label{V-b0-1-fold}
        }
        and for $\Tb>0$,
        \al{
        & \TVb(\Tx,\Ttau,0,\Tb) = \min_{\Tu \in \{0,1\}} \TQb(\Tx,\Ttau,0,\Tb ; \Tu),\label{V-y0-fold}\\
        & \TVb(\Tx,\Ttau,1,\Tb) = \min_{u \in \TcU} \Qb (\Tx,\Ttau,1,\Tb;u),\label{V-y1-fold}
        }
        where,
        \al{
        & \TQb(\Tstate;u) =\Tx^2 + + \beta\sum_{\ell=1}^L \sum_{\Ty_+ \in \{0,1\}} \sum_{\Ttau_+ \in \bZ_+} \notag\\
        &  \times \itg \bigl(\Tp(\Tnewstate \mid \Tstate;u) \bigr.\notag\\
        & \hspace{7em}\bigl. \times \TVb(\Tnewstate)\bigr)\,dx_+. \label{Q-fold}
        }
        \item[c)] There exists an optimal deterministic stationary policy $\Tph\ust$ that minimizes the right-hand side of~\eqref{V-y0-fold} and~\eqref{V-y1-fold} for each $(\Tstate) \in \TcS.$
    \end{itemize}
\end{proposition}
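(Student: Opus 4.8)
The plan is to prove Proposition~\ref{prop:valit-fold} by essentially transcribing the argument that established Proposition~\ref{prop:valit}: first check that the folded MDP $(\TcS,\TcU,\Tp,\Tc)$ satisfies the regularity conditions P1)--P3) of Lemma~\ref{lemma:VI-conditions} with the original data replaced by the folded data, and then invoke \cite[Lemma 4.2.8, Theorem 4.2.3]{hernandez2012discrete}. The value-iteration and measurable-selection machinery of \cite{hernandez2012discrete} does not care about the particular shape of the state space, so once P1)--P3) are in hand, parts (a)--(c) follow exactly as in the proof of Proposition~\ref{prop:valit}, with the iteration \eqref{Vn-b0-fold}--\eqref{v0-fold} (obtained by substituting \eqref{df-fold} and collapsing the two Gaussian terms via the change of variables $x_+\mapsto -x_+$ into $\varphi(\Tx_+,a\Tx,\sigma^2)$) playing the role of \eqref{Vn-b0}--\eqref{v0}.

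For P1): $\Tc(\Tstate)=\Tx^2$ is continuous, hence lower semicontinuous, and is non-negative; inf-compactness is immediate because $\TcU=\{0,1,2\}$ is finite, so every sublevel set in the action variable is a subset of a finite set. For P2): by \eqref{df-fold} the folded density $\Tp$ is a finite sum of terms $p(\pm\Tx_+,\cdot,\cdot,\cdot\mid\cdot;\Tu)$, each of which by \eqref{df-0}--\eqref{df-2} is a product of a kernel that is jointly continuous in $(\Tx_+,\Tx)$ --- either $e^{-(\Tx_+\mp a\Tx)^2/2\sigma^2}$ or $e^{-\Tx_+^2/\varepsilon(\Ttau)}$ --- with point masses in the discrete coordinates; hence $\Tp$ inherits this continuity. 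Applying the change of variables $x_+\mapsto -x_+$ to the second term of \eqref{df-fold} shows that, after summing over the discrete coordinates and over $\ell$ and integrating in $\Tx_+$ over $\bR_+$, one recovers the integral of $p$ over all of $\bR$, which is $1$; thus $\Tp$ is a genuine stochastic kernel on $\TcS$, and strong continuity then follows exactly as in the proof of P2) of Lemma~\ref{lemma:VI-conditions}, using \cite[Example C.6]{hernandez2012discrete}. For P3): by Assumption~\ref{assump-stable} there is a policy $\phi$ for the original MDP with $\Jb(\state;\phi)<\infty$; since the running cost sees the plant state only through $|x|=\Tx$ and, by Proposition~\ref{prop:even}, both the value function and an optimal policy of the original MDP are even in $x$, the same policy --- viewed as acting on $|x|$ --- yields a finite-cost policy for the folded MDP, so the analogue of Assumption~\ref{assump-stable} holds for $(\TcS,\TcU,\Tp,\Tc)$.

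With P1)--P3) verified, \cite[Theorem 4.2.3]{hernandez2012discrete} gives part (a), i.e., $\TVb_n\to\TVb$ pointwise on $\TcS$, and also that $\TVb$ solves the discounted-cost optimality equation, which in our notation is precisely \eqref{V-b0-fold}--\eqref{Q-fold}, giving part (b); finally the measurable-selection result \cite[Lemma 4.2.8]{hernandez2012discrete} furnishes a deterministic stationary policy $\Tph\ust$ attaining the minimum on the right-hand side of \eqref{V-y0-fold}--\eqref{V-y1-fold} for every $(\Tstate)\in\TcS$, which is part (c). The only step that requires genuine care, rather than copying the earlier proof, is confirming that $\Tp$ of \eqref{df-fold} is a bona fide transition kernel on the reduced space $\TcS=\bR_+\times\bZ_+\times\{0,1\}\times\bB$ --- its normalization on $\bR_+$ and its strong continuity there --- and this is where I expect the main (still modest) obstacle to lie; lower semicontinuity, inf-compactness, and the invocation of \cite{hernandez2012discrete} are routine adaptations of the corresponding steps for the original MDP.
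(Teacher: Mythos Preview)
Your proposal is correct and follows essentially the same approach as the paper: the paper does not give a separate proof of Proposition~\ref{prop:valit-fold} but simply remarks that the folded MDP satisfies the same regularity properties as in Lemma~\ref{lemma:VI-conditions} (``the proof is similar to Lemma~\ref{lemma:VI-conditions}'') and then invokes \cite[Lemma 4.2.8, Theorem 4.2.3]{hernandez2012discrete}. Your write-up is in fact more detailed than the paper's, in particular the explicit check that $\Tp$ integrates to one on $\bR_+$ via the change of variables $x_+\mapsto -x_+$, and the use of Proposition~\ref{prop:even} to transfer Assumption~\ref{assump-stable} to the folded MDP.
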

Next, we establish that the folded MDP $(\TcS,\TcU,\Tp,\Tc)$ is equivalent to the original MDP~\eqref{obj-disc}.
\begin{proposition}\label{prop:equiv}
    The functions $\TQb, \TVb, $~\eqref{V-b0-fold}-\eqref{Q-fold} corresponding to the folded MDP agree with $\Qb, \Vb$~\eqref{V-b0}-\eqref{Q} of the original MDP on $\TcS$, i.e., we have the following for each $(\state) \in \cS, u \in \cU,$
    \al{
    & \Qb(\state;u) = \TQb(|x|,\tau,y,b;u),  \notag\\
    & \Vb(\state)=\TVb(|x|,\tau,y,b).
    }
    Thus, if $\Tph\ust$ and $\phi\ust$ is an optimal policy for the folded and original MDP respectively, then
    \al{
    \phi\ust(\state) = \Tph\ust(|x|,\tau,y,b).\notag
    }
\end{proposition}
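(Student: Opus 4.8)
The plan is to establish the two identities simultaneously by induction on the value-iteration stage $n$, and then pass to the limit using the convergence results already in hand. Concretely, I would show that for every $n\in\bZ_+$,
\[
\Vb_n(\state)=\TVb_n(|x|,\tau,y,b),\qquad \Qb_n(\state;u)=\TQb_n(|x|,\tau,y,b;u)
\]
for all $(\state)\in\cS$ and every admissible $u\in\cU$. The base case $n=0$ is immediate from the initializations $\Vb_0\equiv 0$~\eqref{v0} and $\TVb_0\equiv 0$~\eqref{v0-fold}.

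For the inductive step, assume the claim holds at stage $n$; by Proposition~\ref{prop:even} (whose proof is itself an induction on the iterates) $\Vb_n(\cdot,\tau,y,b)$ is even. The key computation is to rewrite each integral $\int_{\bR}(\cdot)\,dx_+$ appearing in $\Qb_{n+1}$~\eqref{Qn0}--\eqref{Qn2-y1} as an integral over $\bR_+$: splitting $\bR=\bR_+\cup\bR_-$ and applying the change of variables $x_+=-x_+'$ on the $\bR_-$ piece, the Gaussian kernel $e^{-(x_+-ax)^2/2\sigma^2}$ turns into $e^{-(x_+'+ax)^2/2\sigma^2}$, so that, using evenness of $\Vb_n$ together with the induction hypothesis $\Vb_n(x_+',\cdot)=\TVb_n(x_+',\cdot)$,
\[
\itgR e^{-\frac{(x_+-ax)^2}{2\sigma^2}}\Vb_n(x_+,\cdot)\,dx_+=\itg \varphi(x_+,a|x|,\sigma^2)\,\TVb_n(x_+,\cdot)\,dx_+,
\]
where I also used that $\varphi(v,s,z)$ is even in $s$. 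Likewise, the term carrying the kernel $e^{-x_+^2/\varepsilon(\tau)}$ in~\eqref{Qn2-y1}, already even in $x_+$, yields $2\itg \psi(x_+,\varepsilon(\tau))\TVb_n(x_+,\cdot)\,dx_+$. Comparing these with the defining formulas~\eqref{Qn0-fold}--\eqref{Qn2-y1-fold} — equivalently, observing that the folded transition density~\eqref{df-fold} was built precisely to realize the sum of the $\bR_+$ and reflected $\bR_-$ contributions — gives $\Qb_{n+1}(\state;u)=\TQb_{n+1}(|x|,\tau,y,b;u)$ for each $u$. Since the minimizations defining $\Vb_{n+1}$~\eqref{Vn-b0}--\eqref{Vn-y1} and $\TVb_{n+1}$~\eqref{Vn-b0-fold}--\eqref{Vn-y1-fold} are over the same action subsets, which depend only on $(\tau,y,b)$ and not on the sign of $x$, the identity for the value iterates follows, completing the induction.

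Letting $n\to\infty$ and invoking $\Vb_n(\state)\to\Vb(\state)$ from Proposition~\ref{prop:valit}(a) and $\TVb_n(|x|,\tau,y,b)\to\TVb(|x|,\tau,y,b)$ from Proposition~\ref{prop:valit-fold}(a), the stagewise identities pass to the limit, yielding $\Vb(\state)=\TVb(|x|,\tau,y,b)$ and $\Qb(\state;u)=\TQb(|x|,\tau,y,b;u)$. For the policy claim, $\phi\ust(\state)$ minimizes the right-hand side of the optimality equation~\eqref{V-y0}--\eqref{V-y1}, i.e.\ $\phi\ust(\state)\in\argmin_{u}\Qb(\state;u)=\argmin_{u}\TQb(|x|,\tau,y,b;u)$, which is exactly the set from which $\Tph\ust(|x|,\tau,y,b)$ is selected in Proposition~\ref{prop:valit-fold}(c); hence one may take $\phi\ust(\state)=\Tph\ust(|x|,\tau,y,b)$.

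I expect the main obstacle to be the bookkeeping in the inductive step: the reflection change of variables must be carried out case by case over $u\in\{0,1,2\}$ and both $y\in\{0,1\}$, tracking the $p$ / $(1-p)$ branch structure of~\eqref{Qn1},~\eqref{Qn2-y1}, and in each case one must verify that the reflected $\bR_-$ integral combines with the $\bR_+$ integral to reproduce exactly the kernel $\varphi(\cdot,a\Tx,\sigma^2)$ (or the doubled $\psi(\cdot,\varepsilon(\Ttau))$ term) appearing in~\eqref{Qn0-fold}--\eqref{Qn2-y1-fold}. A minor but necessary point already noted above is that the evenness of Proposition~\ref{prop:even} is needed at the level of the iterates $\Vb_n$, not merely the limit $\Vb$, which is precisely what that proof delivers.
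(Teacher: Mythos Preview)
Your proposal is correct and follows essentially the same route as the paper: induction on the value-iteration stage, splitting each $\itgR$ in~\eqref{Qn0}--\eqref{Qn2-y1} into $\bR_+\cup\bR_-$, reflecting the $\bR_-$ piece via $x_+\mapsto -x_+$, and invoking the iterate-level evenness from Proposition~\ref{prop:even} together with the induction hypothesis to recover~\eqref{Qn0-fold}--\eqref{Qn2-y1-fold}, then passing to the limit via Propositions~\ref{prop:valit} and~\ref{prop:valit-fold}. The only cosmetic difference is that the paper first establishes $\Qb_{n+1}(x,\cdot)=\TQb_{n+1}(x,\cdot)$ for $x\in\bR_+$ and then appeals to evenness of $\Qb_{n+1}$ to extend to all $x$, whereas you absorb this step by noting that $\varphi(v,s,z)$ is even in $s$ and writing $\varphi(\cdot,a|x|,\sigma^2)$ directly.
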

\begin{proof}
    We will first show that these properties hold for the iterates $\Vb_n~\eqref{Vn-b0}-\eqref{Vn-y1},\TVb_n~\eqref{Vn-b0-fold}-\eqref{Vn-y1-fold},\Qb_n~\eqref{Qn0}-\eqref{Qn2-y1},$ and $\TQb_n~\eqref{Qn0-fold}-\eqref{Qn2-y1-fold}$ using induction. The results would then follow from Propositions~\ref{prop:valit} and~\ref{prop:valit-fold}. Since from Proposition~\ref{prop:even} we have that $\Vb_0(\cdot,\tau,y,b)$ is even, we get $\Vb_0(\state) = \Vb_0(|x|,c,y,b)= \TVb_0(|x|,c,y,b)=0$.
    ~Thus the base case holds. Next, assume that for each $(\state) \in \TcS$, we have $\Vb_k(\state)=\TVb_k(\state)$ for $k=1,2,\ldots,n$. We will show $\Qb_{n+1}(\state;u)=\TQb_{n+1}(\state;u)$. For this purpose, we first consider the case for $b>0$ and $u=1$ for each $(x,\tau,y) \in \bR_+ \times \bZ_+ \times \{0,1\}$,
    \al{
    & \Qb_{n+1}(x,\tau,y,b;1) = x^2 + \beta \sum_{\ell=1}^L p_{\ell}\notag\\
    & \times \biggl( p \itgR \psi(x_+-ax,\sigma^2) V_n(x_+,\tau+1,y,b_+(\ell))\, dx_+\notag\\
    & +  (1-p)\itgR \psi(x_+-ax,\sigma^2) V_n(x_+,1,1,b_+(\ell))\, dx_+ \biggr)\notag\\
    & = x^2 + \beta \sum_{\ell=1}^L p_{\ell} \notag\\
    & \times \biggl[p \biggl(\int_{\bR_+} \psi(x_+-ax,\sigma^2) \Vb_n(x_+,\tau+1,y,b_+(\ell))\, dx_+ \biggr.\notag\\
    & \biggl. + \int_{\bR_-} \psi(x_+-ax,\sigma^2) \Vb_n(x_+,\tau+1,y,b_+(\ell))\, dx_+\biggl)\notag\\
    & +  (1-p) \biggl(\int_{\bR_+} \psi(x_+-ax,\sigma^2) \Vb_n(x_+,1,1,b_+(\ell))\, dx_+ \biggr.\notag\\
    & \biggl. + \int_{\bR_-} \psi(x_+-ax,\sigma^2) \Vb_n(x_+,1,1,b_+(\ell))\, dx_+\biggl)\biggr]\notag\\
    & = x^2 + \sum_{\ell=1}^L p_{\ell} \notag\\
    & \times \biggl[p \biggl(\int_{\bR_+} \psi(x_+-ax,\sigma^2) \Vb_n(x_+,\tau+1,0)\, dx_+ \biggr.\notag\\
    & \biggl. + \int_{\bR_+} \psi(-x_+-ax,\sigma^2) \Vb_n(-x_+,\tau+1,0)\, dx_+\biggl)\notag\\
    & + \beta (1-p) \biggl(\int_{\bR_+} \psi(x_+-ax,\sigma^2) \Vb_n(x_+,1,1,b_+(\ell))\, dx_+ \biggr.\notag\\
    & \biggl. + \int_{\bR_+} \psi(-x_+-ax,\sigma^2) \Vb_n(-x_+,1,1,b_+(\ell))\, dx_+\biggl)\biggr]\notag\\
    & = x^2 + \beta \sum_{\ell=1}^L p_{\ell} \notag\\
    & \times \biggl(p \itgR \varphi(x_+,ax,\sigma^2) \TVb_n(x_+,\tau+1,y,b_+(\ell))\, dx_+\notag\\
    & + (1-p)\itgR \varphi(x_+,ax,\sigma^2) \TVb_n(x_+,1,1,b_+(\ell))\, dx_+\notag\\
    & = \TQb_{n+1}(x,\tau,y,b;1),\notag
    }
    where first equality follows from~\eqref{Qn1}. The fourth equality follows since from Proposition~\ref{prop:even} we have that $\Vb_n(\cdot,\tau,y,b)$ is even and also the induction hypothesis which states that $\Vb_n(x,\tau,y,b)=\TVb_n(x,\tau,y,b)$. Finally, the last equality follows from~\eqref{Qn1-fold}. Hence, from Proposition~\ref{prop:even} combined with the above result, it follows that for each $(x,\tau,y,b) \in \cS$, with $b>0, \Qb_{n+1}(x,\tau,y,b;1)=\Qb_{n+1}(|x|,\tau,y,b;1)=\TQb_{n+1}(|x|,\tau,y,b;1)$. Similarly, it can be shown that that for each $(x,\tau,y,b) \in \cS$, $\Qb_{n+1}(x,\tau,y,b;0)=\Qb_{n+1}(|x|,\tau,y,b;0)=\TQb_{n+1}(|x|,\tau,y,b;0)$. Also, it can be shown that for each $(x,\tau,1,b) \in \cS$, $\Qb_{n+1}(x,\tau,1,b;2)=\Qb_{n+1}(|x|,\tau,1,b;2)=\TQb_{n+1}(|x|,\tau,1,b;2)$.~Thus, the claim for $\Vb_n$ and $\TVb$ follows from Propositions~\ref{prop:valit} and~\ref{prop:valit-fold} respectively. This completes the induction. Moreover, the properties for $\phi\ust$ and~$\Tph\ust$ hold because the optimal decisions for the original MDP and the folded MDP are obtained by minimizing $\Qb(x,\tau,y,b;\cdot),\TQb(x,\tau,y,b;\cdot)$, as is described in Propositions~\ref{prop:valit} and~\ref{prop:valit-fold} respectively. This completes the proof.
\end{proof}

\section{Structural results}\label{sec:structural results}
In this section, we will firstly analyze the folded MDP $(\TcS,\TcU,\Tp,\Tc)$ and derive structural results which characterize optimal scheduling policy of this folded MDP. Then, we unfold the folded MDP back to the original MDP~\eqref{obj-disc} by using Proposition~\ref{prop:equiv}, and this allows us to deduce similar structural results for the original MDP. We begin by demonstrating that the value function $\TVb$~\eqref{V-y0-fold}-\eqref{V-y1-fold} of the folded MDP satisfies a monotonicity property in the following proposition.
\begin{proposition}\label{prop:monotoneV}
    Consider the folded MDP $(\TcS,\TcU,\Tp,\Tc)$ that satisfies Assumptions~\ref{assump:k} and~\ref{assump-stable}. Its value function $\TVb$~\eqref{V-b0-fold}-\eqref{V-y1-fold} satisfies the following properties:
    
    A) For each $(\Ttau,\Ty,\Tb) \in \bZ_+ \times \{0,1\} \times \bB$, the function $\TVb(\cdot,\Ttau,\Ty,\Tb)$ is non-decreasing (with respect to $\Tx$).

    B) For each $(\Tx,\Ttau,\Ty) \in \bR_+ \times \bZ_+ \times \{0,1\}$,  $\TVb(\Tx,\Ttau,\Ty,\cdot)$ are non-increasing (with respect to $\Tb$).
\end{proposition}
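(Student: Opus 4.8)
The plan is to leverage the value iteration convergence $\TVb_n \to \TVb$ from Proposition~\ref{prop:valit-fold}(a): since a pointwise limit of non-decreasing (resp.\ non-increasing) functions inherits the same monotonicity, it suffices to show that \emph{every} iterate $\TVb_n$ is non-negative, is non-decreasing in $\Tx$, and is non-increasing in $\Tb$. I would establish all three properties by a single induction on $n$; the base case $\TVb_0 \equiv 0$~\eqref{v0-fold} is trivial, and non-negativity propagates immediately from~\eqref{Qn0-fold}--\eqref{Qn2-y1-fold} since $\Tx^2 \ge 0$, $\psi,\varphi \ge 0$, $p_\ell \ge 0$, $\beta \in (0,1)$, and $\TVb_{n+1}$ is a pointwise minimum of the $\TQb_{n+1}(\cdot;\Tu)$.

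The technical heart of part A) is a monotonicity lemma for the folded-Gaussian kernel $\varphi(\Tx_+,s,z) = \psi(\Tx_+-s,z)+\psi(\Tx_+ +s,z)$: for every non-negative non-decreasing $g\colon\bR_+\to\bR_+$, the map $s\mapsto \itg \varphi(\Tx_+,s,z)\,g(\Tx_+)\,d\Tx_+$ is non-decreasing in $|s|$. I would prove this as follows. First, $\itg \varphi(\Tx_+,s,z)\,d\Tx_+ = \int_{\bR} \psi(\Tx_+-s,z)\,d\Tx_+ = \sqrt{2\pi z}$, independent of $s$. Second, with $F_s(w):=\int_0^w \varphi(v,s,z)\,dv = \int_{-w-s}^{\,w-s} e^{-u^2/(2z)}\,du$, differentiation in $s\ge 0$ gives $F_s'(w) = -e^{-(w-s)^2/(2z)} + e^{-(w+s)^2/(2z)} \le 0$ for $w,s\ge 0$, so $F_s(w)$ is non-increasing in $s\ge 0$ for each fixed $w$. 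Third, the layer-cake identity $g(v) = g(0) + \int_{(0,v]} dg(w)$ together with Fubini yields $\itg \varphi(v,s,z)\,g(v)\,dv = g(0)\sqrt{2\pi z} + \int_{\bR_+}\big(\sqrt{2\pi z} - F_s(w)\big)\,dg(w)$, where $dg \ge 0$; since $\sqrt{2\pi z}-F_s(w)$ is non-decreasing in $s\ge 0$, so is the whole expression, and evenness in $s$ finishes the lemma. Granting this, property A) follows: assuming $\TVb_n(\cdot,\cdot,\cdot,\cdot)$ is non-negative and non-decreasing in its first argument, each of $\TQb_{n+1}(\cdot,\Ttau,\Ty,\Tb;0)$~\eqref{Qn0-fold}, $\TQb_{n+1}(\cdot,\Ttau,\Ty,\Tb;1)$~\eqref{Qn1-fold}, $\TQb_{n+1}(\cdot,\Ttau,1,\Tb;2)$~\eqref{Qn2-y1-fold} is non-decreasing in $\Tx$ on $\bR_+$ --- the $\Tx^2$ term is non-decreasing, the $\varphi(\Tx_+,a\Tx,\sigma^2)$-integrals are non-decreasing in $|a\Tx| = |a|\,\Tx$ by the lemma, and the $\psi(\Tx_+,\varepsilon(\Ttau))$-integral in the $\Tu=2$ case does not involve $\Tx$. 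Hence $\TVb_{n+1}(\cdot,\Ttau,\Ty,\Tb)$, a pointwise minimum of such functions over the feasible action set~\eqref{Vn-b0-fold}--\eqref{Vn-y1-fold}, is non-decreasing.

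For part B) the relevant observations are that the battery maps $\Tb\mapsto \Tb_+(\ell)=\min\{\Tb+\ell-1,B\}$ and $\Tb\mapsto\Tb'_+(\ell)=\min\{\Tb+\ell,B\}$ are non-decreasing, and that the feasible action sets are nested in $\Tb$ (only action $1$ is gated by $\Tb>0$). Assuming $\TVb_n(\Tx,\Ttau,\Ty,\cdot)$ is non-increasing, each fixed-action $Q$-function $\TQb_{n+1}(\Tx,\Ttau,\Ty,\cdot;\Tu)$ is non-increasing in $\Tb$ (the $\Tx^2$ term is $\Tb$-free, and $\TVb_n$ is evaluated at the $\Tb$-non-decreasing battery states). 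Then for $0<\Tb_1<\Tb_2$, both $\TVb_{n+1}(\Tx,\Ttau,\Ty,\Tb_i)$ are minima over the same action set, so monotonicity of each $Q$ gives $\TVb_{n+1}(\Tx,\Ttau,\Ty,\Tb_1)\ge\TVb_{n+1}(\Tx,\Ttau,\Ty,\Tb_2)$; for $\Tb_1=0<\Tb_2$, the $\Tb_2$-minimum is over a superset of actions and is therefore bounded above by $\TQb_{n+1}(\Tx,\Ttau,\Ty,\Tb_2;0)\le\TQb_{n+1}(\Tx,\Ttau,\Ty,0;0)=\TVb_{n+1}(\Tx,\Ttau,\Ty,0)$ when $\Ty=0$, and analogously using the everywhere-feasible action $2$ (or $0$) when $\Ty=1$. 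This closes the induction and, passing to the limit, proves A) and B) for $\TVb$.

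The main obstacle is the kernel monotonicity lemma: making the folded-normal stochastic-dominance argument rigorous for a merely lower-semicontinuous, non-smooth value iterate $g$ --- handled via the Lebesgue--Stieltjes integration-by-parts above --- and in particular the sign computation $F_s'(w)\le 0$. Everything else is routine case bookkeeping over~\eqref{Vn-b0-fold}--\eqref{Qn2-y1-fold}.
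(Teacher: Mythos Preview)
Your Part~A argument matches the paper's: both induct on the value iterates and reduce the inductive step to a stochastic-dominance property of the folded-Gaussian kernel $\varphi$. Your CDF $F_s(w)=\int_0^w\varphi(v,s,z)\,dv$ together with the layer-cake identity is exactly the content of the paper's Lemma~\ref{appen:lemma-inc-func}, which instead works with the complementary tail $S(\bar{x},\Tx)=\int_{\bar{x}}^\infty\varphi(\Tx_+,a\Tx,\sigma^2)\,d\Tx_+$ and then cites Puterman's Lemma~4.7.2; these are two presentations of the same first-order stochastic-dominance fact.

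Part~B is where you genuinely diverge. The paper does \emph{not} induct on the iterates for this half; it gives a direct coupling argument at the level of the infinite-horizon cost: if $\Tb_2\ge\Tb_1$, then any sequence of decisions feasible from battery level $\Tb_1$ is also feasible from $\Tb_2$ (extra energy is never a constraint), so one can pick $\Tph'_2$ mimicking $\Tph_1$ to get $\TVb(\Tx,\Ttau,\Ty,\Tb_2)\le\TJb(\Tx,\Ttau,\Ty,\Tb_2;\Tph'_2)=\TJb(\Tx,\Ttau,\Ty,\Tb_1;\Tph_1)$ for every such $\Tph_1$, and taking the infimum finishes. Your route --- monotone battery-update maps $\Tb\mapsto\Tb_+(\ell),\Tb'_+(\ell)$ plus nested feasible action sets --- is correct and makes the mechanism more transparent, but it forces you to carry an additional induction hypothesis through the four cases~\eqref{Vn-b0-fold}--\eqref{Vn-y1-fold}. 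The paper's coupling is shorter and bypasses that bookkeeping; your approach has the incidental advantage of also establishing monotonicity in $\Tb$ for every finite-horizon iterate $\TVb_n$, not just the limit.
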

\begin{proof}
    A) We will show that this property holds for the iterates $\TVb_n, n \in \bZ_+$ generated by the value iteration~\eqref{Vn-b0-fold}-\eqref{Vn-y1-fold}. The result would then follow from Proposition~\eqref{prop:valit-fold} since $\lim_{n \rightarrow \infty} \TVb_n(\Tstate)=\TVb(\Tstate)$. We will use induction to show this property for $\TVb_n$.~Since $\TVb_0(\Tstate)=0$~\eqref{v0-fold}, the property holds for $n=0$. Next, assume that for each $(\Ttau,\Ty,\Tb) \in \bZ_+ \times \{0,1\} \times \bB$, the functions $\TVb_k(\cdot,\Ttau,\Ty,\Tb), k=1,2,\ldots,n$ are non-decreasing. We will now show that $\TQb_{n+1}(\cdot,\Ttau,\Ty,\Tb;\Tu)$ is also non-decreasing for each $\Tu \in \TcU$. For this purpose, let $\Tx' \geq \Tx, \Tx',\Tx \in \bR_+,$ and consider the following two cases:
    
    Case i): For $\Tu=1,\Tb>0$ we have,
    \al{
    & \TQb_{n+1}(\Tx',\Ttau,\Ty;1) = \Tx'^2 + \beta \sum_{\ell=1}^L p_{\ell} \notag\\
    &  \times \biggl(p \itg \varphi(\Tx_+,a\Tx',\sigma^2)\TVb_n(\Tx_+,\Ttau+1,\Ty,\Tb_+(\ell))\, d\Tx_+\notag\\
    & +  (1-p) \itg \varphi(\Tx_+,a\Tx',\sigma^2)\TVb_n(\Tx_+,1,1,\Tb_+(\ell))\, d\Tx_+\biggr)\notag\\
    & \geq \Tx^2 + + \beta \sum_{\ell=1}^L p_{\ell} \notag\\
    &\times \biggl(p \itg \varphi(\Tx_+,a\Tx,\sigma^2)\TVb_n(\Tx_+,\Ttau+1,\Ty,\Tb_+(\ell))\, d\Tx_+\notag\\
    & +  (1-p) \itg \varphi(\Tx_+,a\Tx,\sigma^2)\TVb_n(\Tx_+,1,1,\Tb_+(\ell))\, d\Tx_+\biggr)\notag\\
    & = \TQb_{n+1}(\Tx,\Ttau,\Ty,\Tb;1),\notag
    }
    where the first equality follows from~\eqref{Qn1-fold}, and the inequality follows from our induction hypothesis on $\TVb_n$ and Lemma~\ref{appen:lemma-inc-func}. 
    
    Case ii): For $\Tu=2$ we have,
    \al{
    & \TQb_{n+1}(\Tx',\Ttau,1,\Tb;2)= \Tx'^2 + \beta \sum_{\ell=1}^L p_{\ell} \notag\\
    &  \times \biggl(p\itg \varphi(\Tx_+,a\Tx',\sigma^2) \TVb_n(\Tx_+,\Ttau+1,1,\Tb'_+(\ell))\, d\Tx_+\notag\\
    & + 2 (1-p)\itg \!\!\psi(\Tx_+,\varepsilon(\Ttau)) \TVb_n(\Tx_+,\Ttau+1,0,\Tb'_+(\ell))\, d\Tx_+\biggr)\notag\\
    & \geq \Tx^2 + \beta \sum_{\ell=1}^L p_{\ell} \notag\\
    & \times \biggl(p\itg \varphi(\Tx_+,a\Tx,\sigma^2) \TVb_n(\Tx_+,\Ttau+1,1,\Tb'_+(\ell))\, d\Tx_+\notag\\
    & + 2 (1-p)\itg \!\!\psi(\Tx_+,\varepsilon(\Ttau)) \TVb_n(\Tx_+,\Ttau+1,0,\Tb'_+(\ell))\, d\Tx_+\biggr)\notag\\
    & = \TQb_{n+1}(\Tx,\Ttau,1,\Tb;2),\notag
    }
    where the first equality follows from~\eqref{Qn2-y1-fold}, and the inequality follows from our induction hypothesis on $\TVb_n$ and Lemma~\ref{appen:lemma-inc-func}.
    
    Similarly, we can that $\TQb_{n+1}(\cdot,\Ttau,\Ty,\Tb;0)$ is non-decreasing. Now, since $\TVb_{n+1}(\Tx,\Ttau,0,0)=\TQb_{n+1}(\Tx,\Ttau,0,0;0)$~\eqref{Vn-b0-fold}, then  $\TVb_{n+1}(\Tx,\Ttau,0,0)$ is non-decreasing. $\TVb_{n+1}(\Tx,\Ttau,1,0)$ is the pointwise minimum of $\TQb_{n+1}(\Tx,\Ttau,1,0;\cdot)$ taken with respect to $\Tu \in \{1,2\}$~\eqref{Vn-b0-y1-fold}, we have that $\TVb_{n+1}(\cdot,\Ttau,0,0)$ is non-decreasing.Similarly, for $\Tb>0$, $\TVb_{n+1}(\Tx,\Ttau,0,b)$~\eqref{Vn-y0-fold} and $\TVb_{n+1}(\Tx,\Ttau,1,b)$~\eqref{Vn-y1-fold} are also non-decreasing.~This completes the induction, and we have proved the claim.

    B) Consider two systems with cost functions $\TJb(\Tx,\Ttau,\Ty,\Tb_1;\Tph_1)$ and $\TJb(\Tx,\Ttau,\Ty,\Tb_2;\Tph_2)$ such that $\Tb_2 \geq \Tb_1$, where $\TJb$ is cost-to-go~\eqref{cost-to-go} for the folded MDP corresponding to the original MDP~\eqref{obj-disc}.~Then, for any sequence of decisions taken under policy $\Tph_1:=(\Tu_0,\Tu_1,\ldots)$ for system with battery level $\Tb_1$, we can always choose a corresponding policy $\Tph'_2:=(\Tu'_0,\Tu'_1,\ldots)$ for the system with battery level $\Tb_2$ such that  $\TJb(\Tx,\Ttau,\Ty,\Tb_1;\Tph_1)=\TJb(\Tx,\Ttau,\Ty,\Tb_2;\Tph'_2)$. This means that $\TJb(\Tx,\Ttau,\Ty,\Tb_2;\Tph_1) \le \TJb(\Tx,\Ttau,\Ty,\Tb_1;\Tph_1)$ because $\Tb_2 \geq \Tb_1$. Thus, $\TVb(\Tx,\Ttau,\Ty,\Tb_2) \leq \TVb(\Tx,\Ttau,\Ty,\Tb_1)$. This completes the proof. 
\end{proof}    
Next, we introduce the class of threshold-type policies for the folded MDP as well as the original MDP. We will then show that an optimal scheduling policy for the folded MDP and the original MDP belongs to these classes.
\begin{definition}[Threshold-type Policy]
    We say that a scheduling policy $\Tph: \TcS \rightarrow \TcU$ for the folded MDP is of threshold-type if 
    it activates the downlink channel only when the plant state exceeds a certain threshold, that depends upon the age of the packet available with the controller and the battery level. Thus, the set of states in which the downlink channel is activated are given as follows,
    \al{
    \left\{ (\Tx,\Ttau,1,\Tb): \Tx \ge  \Tx\ust(\Ttau,\Tb) \right\},
    }
where $\Tx\ust(\Ttau)$ is the threshold value corresponding to the age of the packet at the controller equal to $\Ttau$.   

A threshold-type scheduling for the original MDP~\eqref{obj-disc} is similarly defined, and activates the downlink channel only in the following set of states,
\al{
\left\{ (x,\tau,1,b): |x| \geq x\ust(\tau,b)\right\}.
}
\end{definition}
We now show the existence of an optimal threshold-type scheduling policy for the folded MDP. 
\begin{theorem}\label{main-theorem}
    Consider the folded MDP $(\TcS,\TcU,\Tp,\Tc)$ satisfying Assumptions~\ref{assump:k} and~\ref{assump-stable}.~For each $\Ttau \in \bZ_+$, and $\Tb \in \bB$, there exists a threshold $\Tx\ust(\Ttau,\Tb)$ such that for states of the form $(\Tx,\Ttau,1,\Tb)$, it is optimal to activate the downlink channel only when we have $\Tx \geq \Tx\ust(\Ttau,\Tb)$. Thus, there exists an optimal scheduling policy that admits a threshold structure.
\end{theorem}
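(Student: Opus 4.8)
The plan is to establish the threshold structure by showing that, for states of the form $(\Tx,\Ttau,1,\Tb)$ with $\Tb>0$, the difference $\TQb(\Tx,\Ttau,1,\Tb;2) - \min_{\Tu\in\{0,1\}}\TQb(\Tx,\Ttau,1,\Tb;\Tu)$ is non-increasing in $\Tx$; once this holds, the set of $\Tx$ for which activating the downlink is (weakly) optimal is an up-set $[\Tx\ust(\Ttau,\Tb),\infty)$, which is exactly the threshold claim. For the boundary case $\Tb=0,\Ty=1$ the same argument applies with the minimization over $\{0,2\}$ reducing to comparing $\TQb(\cdot;2)$ against $\TQb(\cdot;0)$. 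As usual, I would prove the monotonicity statement for the value iterates $\TVb_n, \TQb_n$ and pass to the limit using Proposition~\ref{prop:valit-fold}(a).

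First I would isolate the $\Tx$-dependence of each $\TQb_{n+1}(\Tx,\Ttau,\Ty,\Tb;\Tu)$. For $\Tu=0$ and $\Tu=1$ the state-transition kernel enters only through $\varphi(\Tx_+,a\Tx,\sigma^2)$, i.e. through the Gaussian centered at $\pm a\Tx$; for $\Tu=2$ the ``successful transmission'' term has the $\Tx$-independent kernel $\psi(\Tx_+,\varepsilon(\Ttau))$, and only the ``failed transmission'' term retains the $\varphi(\Tx_+,a\Tx,\sigma^2)$ dependence. The crucial consequence is that the \emph{only} way $\Tx$ enters $\TQb(\cdot;2)$ beyond the common $\Tx^2$ term is through a $p$-weighted copy of the same integral operator that appears (with weight $1$) in $\TQb(\cdot;0)$ and $\TQb(\cdot;1)$. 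Writing $G(\Tx) := \itg \varphi(\Tx_+,a\Tx,\sigma^2)\,\TVb_n(\Tx_+,\Ttau+1,1,\cdot)\,d\Tx_+$ and similar quantities, one sees $\TQb_{n+1}(\cdot;2)$ has ``less'' $\Tx$-sensitive positive part than $\TQb_{n+1}(\cdot;0)$ and $\TQb_{n+1}(\cdot;1)$, because the downlink action replaces a full Gaussian-propagated future cost by a $p$-fraction of it plus an $\Tx$-independent remainder. I would make this precise by an inductive submodularity/diminishing-returns argument: show that $\TQb_{n+1}(\Tx,\Ttau,1,\Tb;0)-\TQb_{n+1}(\Tx,\Ttau,1,\Tb;2)$ and $\TQb_{n+1}(\Tx,\Ttau,1,\Tb;1)-\TQb_{n+1}(\Tx,\Ttau,1,\Tb;2)$ are both non-decreasing in $\Tx$, using that $\TVb_n(\cdot,\Ttau+1,\Ty,\Tb)$ is non-decreasing (Proposition~\ref{prop:monotoneV}A) together with the monotonicity of $\Tx\mapsto\itg\varphi(\Tx_+,a\Tx,\sigma^2)g(\Tx_+)\,d\Tx_+$ in $\Tx$ for non-decreasing $g$ (the same kernel-monotonicity fact invoked via Lemma~\ref{appen:lemma-inc-func} in the proof of Proposition~\ref{prop:monotoneV}). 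The induction base is immediate from $\TVb_0\equiv 0$, where all three $\TQb_1$ differ only by $\Tx$-independent constants. The inductive step feeds the updated $\TVb_{n+1}$ (a pointwise minimum of non-decreasing functions, hence non-decreasing) back into the next iteration.

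Once the difference functions are non-decreasing in $\Tx$, define $\Tx\ust(\Ttau,\Tb) := \inf\{\Tx\in\bR_+ : \TQb(\Tx,\Ttau,1,\Tb;2) \le \TQb(\Tx,\Ttau,1,\Tb;\Tu)\ \forall \Tu\in\{0,1\}\}$ (with the convention that the infimum of the empty set is $+\infty$, meaning downlink is never activated). Monotonicity of the differences guarantees that if downlink is optimal at some $\Tx_0$ it remains optimal at every $\Tx\ge\Tx_0$, so the activation region is $\{(\Tx,\Ttau,1,\Tb):\Tx\ge\Tx\ust(\Ttau,\Tb)\}$, and by Proposition~\ref{prop:valit-fold}(c) the policy that picks $\Tu=2$ on this set and a minimizer of $\TQb(\cdot;\{0,1\})$ off it is optimal and stationary; this is the asserted threshold-type optimal policy. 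For the $\Tb=0$ boundary, the comparison is only against $\Tu=0$ and the same reasoning gives the threshold. The main obstacle is the inductive step establishing that the \emph{gap} between $\TQb_{n+1}(\cdot;2)$ and $\TQb_{n+1}(\cdot;0/1)$ is monotone: one must carefully track that the downlink action's future-cost term is a $p$-damped version of the idle/uplink future-cost term sharing the identical $\Tx$-parametrized Gaussian kernel, so that increasing $\Tx$ raises $\TQb(\cdot;0/1)$ at least as fast as it raises $\TQb(\cdot;2)$ — and to handle the $\Tu=1$ case one additionally needs that the battery-indexed value $\TVb_n(\Tx_+,\cdot,\cdot,\Tb_+(\ell))$ versus $\TVb_n(\Tx_+,\cdot,\cdot,\Tb'_+(\ell))$ comparison (with and without the $-1$ energy cost, Proposition~\ref{prop:monotoneV}B) does not disrupt monotonicity in $\Tx$, which it does not since the battery index is $\Tx$-independent.
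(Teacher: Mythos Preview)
Your proposal is essentially the paper's own argument: for states $(\Tx,\Ttau,1,\Tb)$ compare $\TQb(\cdot;2)$ against $\TQb(\cdot;0)$ and $\TQb(\cdot;1)$, show each pairwise difference is monotone in $\Tx$ by combining Proposition~\ref{prop:monotoneV} (monotonicity of $\TVb$ in $\Tx$ and in $\Tb$) with the kernel-monotonicity Lemma~\ref{appen:lemma-inc-func}, and then read off the threshold. The paper carries this out as conditions C1 (for $\Tu=1$, using Proposition~\ref{prop:monotoneV}B to handle the $\Tb_+(\ell)$ vs.\ $\Tb'_+(\ell)$ discrepancy exactly as you outline) and C2 (for $\Tu=0$); your inductive phrasing on the iterates with a limit via Proposition~\ref{prop:valit-fold}(a) and your explicit $\Tb=0$ boundary case are presentational additions, but the substance is the same.
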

\begin{proof}
From~\eqref{V-y1-fold} we have that in order to prove the claim, it is sufficient to show that the following two conditions hold,
    
    C1) $G(\Tx):=\TQb(\Tx,\Ttau,1,\Tb;2)-\TQb(\Tx,\Ttau,1,\Tb;1)$ is non-increasing in $\Tx$. We use $G_{\Tx}$ to denote $\frac{\partial G}{\partial \Tx}$. Then,
    \al{
    & \frac{\partial G}{\partial \Tx}= \beta \sum_{\ell=1}^L p_{\ell} \biggl[p\frac{\partial}{\partial \Tx} \biggl(\itg \varphi(\Tx_+,a\Tx,\sigma^2)\notag\\
    &  \bigl(\TVb_n(\Tx_+,\Ttau+1,1,\Tb'_+(\ell))-\TVb_n(\Tx_+,\Ttau+1,1,\Tb_+(\ell))\bigr)\, d\Tx_+\biggr)\notag\\
    & - (1-p) \frac{\partial}{\partial \Tx} \itg \varphi(\Tx_+,a\Tx,\sigma^2) \TVb_n(\Tx_+,1,1,\Tb_+(\ell))\, d\Tx_+\biggr]\notag\\
    & \le - (1-p) \frac{\partial}{\partial \Tx} \itg \varphi(\Tx_+,a\Tx,\sigma^2) \TVb_n(\Tx_+,1,1,\Tb_+(\ell))\, d\Tx_+\notag\\
    & \le 0,\notag
    }
    where the first inequality follows from Proposition~\ref{prop:monotoneV} since $\Tb'_+(\ell) \geq \Tb_+(\ell)$ for each $l \in \{1,2,\ldots, L\}$, and the second inequality follows from Proposition~\ref{prop:monotoneV} and Lemma~\ref{appen:lemma-inc-func} in the appendix. This implies that for $\Tx' \geq \Tx$, if $G(\Tx) \le 0$, then $G(\Tx') \le G(\Tx) \leq 0$.

    C2) For $\Tx' \ge \Tx$, if $\TQb(\Tx,\Ttau,1,\Tb;2)-\TQb(\Tx,\Ttau,1,\Tb;0) \le 0$, then $\TQb(\Tx',\Ttau,1,\Tb;2)-\TQb(\Tx',\Ttau,1,\Tb;0) \le 0$. Consider,
    \al{
    & \TQb(\Tx',\Ttau,1,\Tb;2)-\TQb(\Tx',\Ttau,1,\Tb;0) =\beta\sum_{\ell=1}^L p_{\ell}\notag\\
    & \times \bigg[(p-1) \itg \varphi(\Tx_+,a\Tx',\sigma^2)
    \TVb_n(\Tx_+,\Ttau+1,1,\Tb'_+(\ell))\, d\Tx_+ \notag\\
    & +2(1-p) \itg \psi(\Tx_+,\varepsilon(\Ttau)) \TVb_n(\Tx_+,\Ttau+1,0,\Tb'_+(\ell))\, d\Tx_+\biggr]\notag\\
    & \leq \TQb(\Tx,\Ttau,1,\Tb;2)-\TQb(\Tx,\Ttau,1,\Tb;0)\notag\\
    & \leq 0,\notag
    }
    where the first inequality follows from Proposition~\ref{prop:monotoneV} and Lemma~\ref{appen:lemma-inc-func} in the appendix. This completes the proof.
\end{proof}
We will now unfold the folded MDP $(\TcS,\TcU,\Tp,\Tc)$ in order to obtain similar results for the original MDP~\eqref{obj-disc}. The following corollary shows structural results for optimal policy of the original MDP.
\begin{corollary}
    Consider the original MDP~\eqref{obj-disc} satisfying Assumptions~\ref{assump:k} and~\ref{assump-stable}. Then, there exists an optimal scheduling policy $\phi\ust$ corresponding to $\Vb$~\eqref{V-b0}-\eqref{V-y1} that is of threshold-type.
\end{corollary}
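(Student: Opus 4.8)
The plan is to obtain the result by ``unfolding'' Theorem~\ref{main-theorem} through the equivalence established in Proposition~\ref{prop:equiv}, so that essentially no new analysis is needed beyond bookkeeping. All the substantive work — monotonicity of $\TVb$ in $\Tx$ and in $\Tb$ (Proposition~\ref{prop:monotoneV}), and the sign conditions C1)--C2) — has already been carried out for the folded MDP.

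First I would invoke Theorem~\ref{main-theorem}: the folded MDP $(\TcS,\TcU,\Tp,\Tc)$ admits an optimal stationary deterministic policy $\Tph\ust$ for which, for every $(\Ttau,\Tb) \in \bZ_+ \times \bB$, the set of states where the downlink channel is activated is $\{(\Tx,\Ttau,1,\Tb): \Tx \geq \Tx\ust(\Ttau,\Tb)\}$, for a suitable threshold $\Tx\ust(\Ttau,\Tb)$; on states with $\Ty = 0$ the action $u=2$ is not admissible, so the threshold characterization is vacuous there. Next, by Proposition~\ref{prop:equiv}, the policy defined by $\phi\ust(\state) := \Tph\ust(|x|,\tau,y,b)$ is optimal for the original MDP~\eqref{obj-disc}, i.e. it attains $\Vb$~\eqref{V-b0}--\eqref{V-y1}.

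It then remains only to check that this $\phi\ust$ matches the definition of a threshold-type policy for the original MDP. For a state $(x,\tau,1,b)$ one has $\phi\ust(x,\tau,1,b) = 2$ if and only if $\Tph\ust(|x|,\tau,1,b) = 2$, which by the previous step holds if and only if $|x| \geq \Tx\ust(\tau,b)$. Hence, setting $x\ust(\tau,b) := \Tx\ust(\tau,b)$, the set of states of the original MDP in which the downlink channel is activated is exactly $\{(x,\tau,1,b): |x| \geq x\ust(\tau,b)\}$, which is precisely the threshold-type form; this is also consistent with Proposition~\ref{prop:even}, which already guarantees that the optimal action depends on $x$ only through $|x|$, so no tie-breaking ambiguity arises.

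I do not anticipate a genuine obstacle here; the corollary is a direct consequence of the folding machinery. The only points requiring care are (i) keeping track of the fact that the downlink action $u=2$ is admissible only when $y=1$, so that the threshold statement is consistent (vacuously) on the $y=0$ states, and (ii) verifying that the threshold produced by Theorem~\ref{main-theorem} genuinely carries the dependence on both the age $\Ttau$ and the battery level $\Tb$ — both of which are already encoded in that theorem's statement.
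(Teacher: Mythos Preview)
Your proposal is correct and follows essentially the same approach as the paper: invoke Theorem~\ref{main-theorem} for the folded MDP and then transfer the threshold structure to the original MDP via Proposition~\ref{prop:equiv}. The paper's proof additionally cites Lemma~\ref{appen:V-inc} (monotonicity of $\Vb$ in $|x|$), but that lemma is itself an immediate consequence of Propositions~\ref{prop:equiv} and~\ref{prop:monotoneV}, so your direct policy-transfer argument is equivalent and arguably cleaner.
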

\begin{proof}
    We have that $\TVb(|x|,\tau,y,b)$ is non-decreasing in $|x|$ from Lemma~\ref{appen:V-inc}. The result then follows from Proposition~\ref{prop:equiv} and Theorem~\ref{main-theorem}.
\end{proof}

\section{Conclusion}
We analyze a WNCS that consists of a plant, a battery-operated sensor, a controller, and an actuator. The sensor has to dynamically decide whether to activate the uplink (sensor to controller) channel or downlink (controller to actuator) channel, or to remain inactive so as to minimize the expected value of an infinite horizon discounted cost that is a quadratic function of the plant state. The wireless uplink and downlink channels are modeled as i.i.d. packet drop channels. We pose this problem as a MDP and show that the system state is given by a four tuple consisting of the plant state, age of the sensor packet available with the controller, a binary variable which indicates whether or not a control packet is available at the controller, and the energy level of the battery. The original MDP is not amenable to analysis since the plant state can assume positive as well as non-positive values, hence we construct a simpler folded MDP, and show that these two are equivalent MDPs. It is shown that the folded MDP admits an optimal scheduling policy with a threshold structure, i.e. in case there is a control packet, the scheduler activates the downlink channel only when the magnitude of the plant state exceeds a certain threshold, where the threshold depends upon the age of the packet and the battery energy level. Upon unfolding the MDP, we recover a similar structural result for the original MDP.~This work can be extended in multiple directions. Firstly, since the state space is infinite, it is not computationally feasible to use the value iteration algorithm to obtain these thresholds. Hence, we would like to design an algorithm that performs an efficient search over the policy space in order to yield these optimal thresholds.~Finally, we would also like to consider the case when the plant is described by a vector process, instead of a scalar. 
\appendix

\section{Preliminary Lemma}\label{lemma:inc-func}
\renewcommand{\thesection}{\Alph{section}}
For ease of reference, we restate the notation here:
\al{ &\psi(v,z):=e^{-\frac{v^2}{2z}},\notag\\
& \varphi(v,s,z):=\psi(v-s,z) + \psi(v+s,z).\notag}
\begin{lemma}\label{appen:V-inc}
    Consider the original MDP~\eqref{obj-disc} satisfying Assumptions~\ref{assump:k} and~\ref{assump-stable}. For each $(\tau,y,b) \in \bZ_+ \times \{0,1\} \times \bB$, the value iterates $\Vb(x,\tau,y,b)$~\eqref{Vn-b0}-\eqref{Vn-y1},  are non-decreasing in $|x|.$
\end{lemma}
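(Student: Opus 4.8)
The plan is to obtain the lemma essentially for free from the equivalence between the original and folded MDPs. By Proposition~\ref{prop:equiv} we have $\Vb(\state)=\TVb(|x|,\tau,y,b)$ for every $(\state)\in\cS$, and by Proposition~\ref{prop:monotoneV}A) the function $\TVb(\cdot,\tau,y,b)$ is non-decreasing on $\bR_+$. Hence, for two states sharing a common $(\tau,y,b)$ with $|x_1|\le|x_2|$, $\Vb(x_1,\tau,y,b)=\TVb(|x_1|,\tau,y,b)\le\TVb(|x_2|,\tau,y,b)=\Vb(x_2,\tau,y,b)$, which is the assertion. The one point deserving a remark is that this is not circular: neither the proof of Proposition~\ref{prop:monotoneV} nor that of the preliminary Lemma~\ref{appen:lemma-inc-func} invokes the present lemma; they rely only on Proposition~\ref{prop:valit-fold} and elementary Gaussian-kernel estimates, so the chain of implications is acyclic.

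If one prefers an argument internal to the original MDP, I would prove the property by induction on the value iterates $\Vb_n$ in~\eqref{Vn-b0}--\eqref{Vn-y1} and then pass to the limit via Proposition~\ref{prop:valit}a). The base case $\Vb_0\equiv0$ is trivial. Assuming $\Vb_k(\cdot,\tau,y,b)$ non-decreasing in $|x|$ for all $k\le n$ and all $(\tau,y,b)$, one checks that each $\Qb_{n+1}(\cdot,\tau,y,b;u)$ in~\eqref{Qn0}--\eqref{Qn2-y1} is non-decreasing in $|x|$: the instantaneous-cost term $x^2=|x|^2$ trivially; each Gaussian-kernel integral $\itgR e^{-(x_+-ax)^2/2\sigma^2}\Vb_n(x_+,\cdot)\,dx_+$ is first folded onto $\bR_+$ against the kernel $\varphi(x_+,ax,\sigma^2)$ using evenness of $\Vb_n$ (Proposition~\ref{prop:even}), after which Lemma~\ref{appen:lemma-inc-func} yields monotonicity in $|ax|=|a|\,|x|$, hence in $|x|$; and the additional term $\itgR e^{-x_+^2/\varepsilon(\tau)}\Vb_n(x_+,\cdot)\,dx_+$ occurring for $u=2$ is independent of $x$. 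Then $\Vb_{n+1}$, being one such $\Qb_{n+1}$ or a pointwise minimum of finitely many of them, inherits monotonicity in $|x|$, and this is preserved in the limit $n\to\infty$.

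I do not foresee a genuine obstacle here: the substantive ingredients (the folding equivalence, the monotonicity of $\TVb$, and the integral-monotonicity estimate Lemma~\ref{appen:lemma-inc-func}) are all already in place. The only care needed is the acyclicity remark above for the short proof, or, for the self-contained version, the observation that ``non-decreasing in $|x|$'' is a closed property under pointwise limits, so that it transfers from the iterates $\Vb_n$ to $\Vb$.
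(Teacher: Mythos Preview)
Your short argument is exactly the paper's proof: invoke Proposition~\ref{prop:equiv} to write $\Vb(\state)=\TVb(|x|,\tau,y,b)$ and then apply Proposition~\ref{prop:monotoneV}A). Your acyclicity remark and the alternative direct induction are correct extras, but the core approach matches the paper verbatim.
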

\begin{proof}
    We have that for $(\state) \in \cS$, $\Vb(\state)=\Vb(|x|,\tau,y,b)=\TVb(|x|,\tau,y,b)$ from Proposition~\ref{prop:equiv}. The result then follows from Proposition~\ref{prop:monotoneV}.
\end{proof}
\begin{lemma}\label{appen:lemma-inc-func}
    Consider the folded MDP $(\TcS,\TcU,\Tp,\Tc)$. Let $\Tx' \geq \Tx$ where $\Tx',\Tx \in \bR_+$. Assume for each $(\Ttau,\Ty,\Tb) \in \bZ_+ \times \{0,1\} \times \bB$, the value iterate corresponding to step $n,$ $\TVb_n(\cdot,\Ttau,\Ty,\Tb)$ is non-decreasing. Then, $\TVb_n$ satisfies the following,
    \al{
    & \itg \varphi(\Tx_+,a\Tx',\sigma^2)\TVb_n(\Tx_+,\Ttau_+,\Ty_+,\Tb_+)\, d\Tx_+ \notag\\
    & \geq \itg \varphi(\Tx_+,a\Tx,\sigma^2)\TVb_n(\Tx_+,\Ttau_+,\Ty_+,\Tb_+)\, d\Tx_+\notag
    }
\end{lemma}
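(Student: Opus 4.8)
The plan is to reduce the inequality to a pointwise statement about the convolution kernel $\varphi$. Writing $F(\Tx) := \itg \varphi(\Tx_+,a\Tx,\sigma^2)\TVb_n(\Tx_+,\Ttau_+,\Ty_+,\Tb_+)\, d\Tx_+$, I want to show $F$ is non-decreasing on $\bR_+$; since $\TVb_n(\cdot,\Ttau_+,\Ty_+,\Tb_+) \ge 0$ (it is a sum of squared-state costs, hence non-negative), it suffices to show that for $\Tx' \ge \Tx \ge 0$ the kernel dominates pointwise \emph{after} accounting for the monotonicity of $\TVb_n$, i.e. that $\itg \bigl(\varphi(\Tx_+,a\Tx',\sigma^2) - \varphi(\Tx_+,a\Tx,\sigma^2)\bigr)\TVb_n(\Tx_+,\cdots)\, d\Tx_+ \ge 0$. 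Note $a\Tx'$ and $a\Tx$ may have either sign depending on the sign of $a$, but because $\psi(v,z)=\psi(-v,z)$ is even in $v$, we have $\varphi(\Tx_+, s, \sigma^2) = \psi(\Tx_+-s,\sigma^2)+\psi(\Tx_++s,\sigma^2) = \varphi(\Tx_+,|s|,\sigma^2)$, so without loss of generality I may replace $a\Tx, a\Tx'$ by $|a|\Tx, |a|\Tx'$ and assume the shift parameter $s := |a|\Tx_{\bullet} \ge 0$ is non-decreasing in $\Tx_{\bullet}$.

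The key step is a stochastic-dominance / coupling argument on the half-line. For fixed $s \ge 0$, the function $\Tx_+ \mapsto \varphi(\Tx_+,s,\sigma^2)$ on $\bR_+$ is (up to the normalizing constant $1/\sqrt{2\pi\sigma^2}$) the density of $|Z+s|$ where $Z \sim \cN(0,\sigma^2)$, since folding a $\cN(s,\sigma^2)$ density about the origin produces exactly $\psi(\Tx_+-s,\sigma^2)+\psi(\Tx_++s,\sigma^2)$ on $\bR_+$. Hence $F(\Tx)/\text{const} = \bE\bigl[\TVb_n(|Z+|a|\Tx|,\Ttau_+,\Ty_+,\Tb_+)\bigr]$. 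Now I would argue that $s \mapsto |Z+s|$ is stochastically non-decreasing in $s \ge 0$: for any threshold $t \ge 0$, $\bP(|Z+s| > t) = \bP(Z > t-s) + \bP(Z < -t-s)$, and differentiating in $s$ gives $\psi(t-s,\sigma^2) - \psi(t+s,\sigma^2) \ge 0$ for $s,t \ge 0$ (since $|t-s| \le t+s$). Therefore $|Z+|a|\Tx'|$ stochastically dominates $|Z+|a|\Tx|$, and since $\TVb_n(\cdot,\Ttau_+,\Ty_+,\Tb_+)$ is non-decreasing by hypothesis, taking expectations preserves the inequality, yielding $F(\Tx') \ge F(\Tx)$.

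An alternative, more elementary route avoiding any probabilistic language: split $\itg$ at the midpoint and use the substitution $\Tx_+ \mapsto$ its reflection to pair up the $\psi(\Tx_+-s,\cdot)$ and $\psi(\Tx_++s,\cdot)$ terms, then integrate by parts once to transfer the $\Tx$-derivative onto $\TVb_n$; the boundary terms vanish or have a sign, and the remaining integral is non-negative because $\partial_s \varphi(\Tx_+,s,\sigma^2) = \frac{1}{\sigma^2}\bigl((\Tx_+-s)\psi(\Tx_+-s,\sigma^2) - (\Tx_++s)\psi(\Tx_++s,\sigma^2)\bigr)$ integrates against the non-decreasing, non-negative $\TVb_n$ with the right sign. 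I expect the main obstacle to be handling the sign of $a$ cleanly (the shift $a\Tx$ need not be non-negative) and making the ``stochastically non-decreasing'' step fully rigorous for an arbitrary non-negative non-decreasing (and a priori only measurable, possibly unbounded but locally integrable) $\TVb_n$ — one should either invoke a standard monotone-coupling lemma or approximate $\TVb_n$ from below by non-negative combinations of indicators $\mathbbm{1}\{\Tx_+ > t\}$ and apply monotone convergence, reducing everything to the single inequality $\psi(t-s,\sigma^2) \ge \psi(t+s,\sigma^2)$ for $s,t \ge 0$.
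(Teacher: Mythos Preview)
Your proposal is correct and follows essentially the same approach as the paper: both arguments establish that the tail integral $\int_{\bar x}^\infty \varphi(\Tx_+,s,\sigma^2)\,d\Tx_+$ is non-decreasing in the shift parameter (equivalently, that the folded Gaussian $|Z+s|$ is stochastically increasing in $s\ge 0$), and then integrate against the non-decreasing $\TVb_n$ via the standard stochastic-dominance lemma (Puterman, Lemma~4.7.2). Your handling of the sign of $a$ through the evenness $\varphi(\cdot,s,\cdot)=\varphi(\cdot,|s|,\cdot)$ is cleaner than the paper's case split, and your remark about approximating an unbounded $\TVb_n$ by indicators plus monotone convergence addresses a technical point the paper leaves implicit.
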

\begin{proof}
    For $\Bar{x} >0,$ consider the following,
    \al{
    S(\Bar{x},\Tx):= \int_{\Bar{x}}^{\infty} \varphi(\Tx_+,a\Tx,\sigma^2)\,d\Tx_+.\label{S}
    }
    We will now show that~\eqref{S} is non-decreasing in $\Tx$ for both $a \geq 0$ and $a <0.$ For this, denote $S_{\Tx}(\Bar{x},\Tx):= {\partial S}/{\partial \Tx}.
    $ Then,
    \al{\label{dS}
    S_{\Tx}(\Bar{x},\Tx)=a\varphi(\Bar{x},a\Tx,\sigma^2) \geq 0,
    }
    where~\eqref{dS} follows because for $a \geq 0, \varphi(\Bar{x},a\Tx,\sigma^2) \geq 0$ and for $a <0,\varphi(\Bar{x},a\Tx,\sigma^2)<0.$\par
    The claim then follows from~\cite[Lemma 4.7.2, p.106]{Puterman2014markov} since $S(\Bar{x},\Tx') \geq S(\Bar{x},\Tx)$ with $S(0,\Tx')=S(0,\Tx),$ and $\TVb$ is non-decreasing in $\Tx.$
\end{proof}

\bibliographystyle{IEEEtran}
\bibliography{refs}

\end{document}